\documentclass[
    a4paper,
    11pt,
]{article}

\usepackage{amsmath,amssymb,amsthm}
\usepackage{mathtools,dsfont}
\usepackage{paralist,enumitem,bm,placeins,url}
\usepackage{color,graphicx,subfig} 
\usepackage{cite}

\usepackage{tabularx}
\usepackage{booktabs}
\usepackage{tikz,pgfplots,tikz-3dplot}
\usetikzlibrary{fit}

\usepackage{array}
\newcolumntype{C}[1]{>{\centering\arraybackslash}p{#1}}
\usepackage{tocbasic}
\DeclareTOCStyleEntry[
beforeskip=.2em plus 1pt,
]{tocline}{section}
\usepackage{todonotes}

\usepackage{hyperref} 
\makeatletter
\hypersetup{%
	colorlinks	=true,
	linkcolor	=blue,%
	citecolor	=red,%
	urlcolor	=blue
}



\usepackage[normalem]{ulem}
\usepackage{mathrsfs}
\usepackage{ulem}
\usepackage{caption}
\usepackage{enumitem}
\pgfplotsset{compat=1.7}
\usepackage[title]{appendix}

\newtheorem{thm}{Theorem}[section]
\newtheorem{lem}[thm]{Lemma}
\newtheorem{rem}[thm]{Remark}

\newcommand*{\vv}[1]{\vec{\mkern0mu#1}}
\newcommand{\ek}{e}

\newcommand{\norm}[1]{\Vert#1\Vert}
\newcommand{\bRplus}{\bR_{>0}}
\newcommand{\bRgeq}{\bR_{\geq 0}}

\newcommand{\pV}{\underline{V}_\partial}

\newcommand{\bR}{{\mathbb R}}

\newcommand{\bZ}{\mathbb{Z}}
\newcommand{\bI}{\mathbb{I}}

\newcommand{\bkap}{{\widebar{\varkappa}}}
\newcommand{\mS}{{\mathcal{S}}}
\newcommand{\mV}{{\mathscr{V}}}
\newcommand{\ipd}[1]{\Bigl(#1\Bigr)}

\def\widebar{\overline}

\newcommand{\rd}{\;{\rm d}}
\newcommand{\id}{{\rm id}}
\newcommand{\dd}[1]{\frac{\rm d}{{\rm d}#1}}
\newcommand{\ddt}{\dd{t}}
\newcommand{\nn}{\nonumber}
\newcommand{\ttau}{\Delta t}

\numberwithin{equation}{section}

{{\upshape\bfseries AMS subject classifications. }\ignorespaces}{}

\textwidth 455pt \oddsidemargin 0pt \evensidemargin 0pt \headsep
0pt \headheight 0pt \textheight 655pt \parskip 10pt \parindent 0pt

\setlength{\marginparwidth}{2cm}
\begin{document}
\title{
Stable fully practical finite element methods for \\
axisymmetric Willmore flow 
}

\author{Harald Garcke\footnotemark[1] 
    \and Robert N\"urnberg\footnotemark[2]
    \and Quan Zhao\footnotemark[3]}

\renewcommand{\thefootnote}{\fnsymbol{footnote}}

\footnotetext[1]{Fakult\"at f\"ur Mathematik, Universit\"at Regensburg, 93053 Regensburg, Germany, \\ \tt(\href{mailto:harald.garcke@ur.de}{harald.garcke@ur.de})}
	
\footnotetext[2]{Dipartimento di Mathematica, Universit\`a di Trento,
38123 Trento, Italy, \\
\tt(\href{mailto:robert.nurnberg@unitn.it}{robert.nurnberg@unitn.it}) }

\footnotetext[3]{School of Mathematical Sciences, University of Science and Technology of China, 230026 Hefei,  China \\
\tt(\href{mailto:quanzhao@ustc.edu.cn}{quanzhao@ustc.edu.cn}) }

\date{}
\maketitle

\begin{abstract}
\noindent
We consider fully discrete numerical approximations for axisymmetric 
Willmore flow that are unconditionally stable and
work reliably without remeshing. We restrict our attention
to surfaces without boundary, but allow for spontaneous curvature effects. 
The axisymmetric setting allows us to formulate our schemes in terms of 
the generating curve of the considered surface. 
We propose a novel weak formulation, that combines an evolution equation for 
the surface's mean curvature and the curvature identity of the generating 
curve. The mean curvature is used to describe the gradient flow structure, 
which enables an unconditional stability result for the discrete solutions. 
The generating curve's curvature, on the other hand, describes the surface's
in-plane principal curvature and plays the role of a 
Lagrange multiplier for an equidistribution property on the discrete level. 
We introduce two fully discrete schemes and prove their unconditional 
stability. Numerical results are provided to confirm the convergence, stability and equidistribution properties of the introduced schemes.  
\end{abstract}

\noindent \textbf{Key words.} Willmore flow of surfaces, axisymmetry, parametric finite element method, unconditional stability, equidistribution \\

\noindent \textbf{AMS subject classification.} 65M60, 65M15, 65M12, 35R01

\setlength\parskip{1ex}
\renewcommand{\thefootnote}{\arabic{footnote}}

\setcounter{equation}{0}

\section{Introduction}
\setlength\parindent{24pt}

The Willmore energy, which measures the bending of a surface $\mS$, is given by the surface integral of the square of the 
mean curvature $\varkappa_{_\mS}$ (here the sum of the principle curvatures)
over the surface \cite{Willmore93}
\begin{equation}\label{eq:WillmoreE1}
\frac{1}{2}\int_{\mS}\varkappa_{_\mS}^2\,\rd A. 
\end{equation}
The minimization of the Willmore energy leads to so-called Willmore surfaces. Spheres are the well-known Willmore surfaces of genus 0, as their energies attain the minimum of $8\pi$. The Clifford torus has only recently been proved to be the minimizer among surfaces in $\bR^3$ of genus 1, with Willmore energy of $4\pi^2$ \cite{Marques14min}. Broad applications involving the Willmore energy have been found in the fields of biomembranes \cite{Canham1970minimum, Helfrich73elastic,Seifert97}, computer graphics \cite{Welch94free,Desbrun99implicit}, materials science \cite{Jiang16solid,ChenLSWW18,Grinspun08}, 
computational geometry \cite{BretinLO11,GruberA20} and surface
restoration \cite{ClarenzDDRR04,LeeC23}.

Let $\mS$ be a closed surface in $\bR^3$. We define the mean curvature $\varkappa_{_\mS}$ via
\begin{equation}\label{eq:WillmoreE0}
\varkappa_{_\mS}\,\vec n_{_\mS} = \Delta_{_\mS}\vec\id\quad\mbox{on}\quad \mS,
\end{equation}
where $\vec n_{_\mS}$ is the unit normal to $\mS$,  $\Delta_{_\mS}=\nabla_{_\mS}\cdot\nabla_{_\mS}$ is the Laplace-Beltrami operator with $\nabla_{_\mS}$ being the surface gradient, and $\vec\id$ is the identity function on $\bR^3$. Observe that our sign convention is such that the unit sphere with outer normal $\vec n_{_\mS}$ has mean curvature $\varkappa_{_\mS} = -2$.
A natural bending energy for a surface $\mS$ representing a membrane is then
given by the Willmore energy \eqref{eq:WillmoreE1}. 
In biomembranes, a different chemical environment on the two sides of the membrane leads to an asymmetry in the elastic forces. Then an energy including spontaneous curvature effects can be introduced as \cite{Helfrich73elastic}
\begin{equation}\label{eq:energyS}
\frac{1}{2}\int_{\mS}(\varkappa_{_\mS} - \bkap)^2\rd A, 
\end{equation}
where $\bkap\in\bR$ is the so-called spontaneous curvature which reflects the asymmetry. As before, ${\rm d} A$ denotes integration with respect to the surface measure.
The resulting $L^2$-gradient flow of \eqref{eq:energyS} is then given by 
\cite{BGN08willmore, Kuwert01willmore, Simonett05willmore}
\begin{align}
\mathcal{V}_{_\mS}  &=-\Delta_{_\mS}\varkappa_{_\mS}-(\varkappa_{_\mS}-\bkap)|\nabla_{_\mS}\vec n_{_\mS}|^2 + \frac{1}{2}(\varkappa_{_\mS} - \bkap)^2\varkappa_{_\mS} \nn\\
&=  -\Delta_{_\mS}\varkappa_{_\mS} + 2(\varkappa_{_\mS}-\bkap)\mathcal{K}_{_\mS}-\frac{1}{2}(\varkappa_{_\mS}^2-\bkap^2)\varkappa_{_\mS}\qquad\mbox{on}\quad\mS, \label{eqn:WillmoreS}
\end{align}
where $\mathcal{V}_{_\mS}$ is the normal velocity of an evolving surface $\mS(t)$, $\nabla_{_\mS}\vec n_{_\mS}$ is the Weingarten map and $\mathcal{K}_{_\mS}$ stands for the Gauss curvature. The gradient flow \eqref{eqn:WillmoreS} is a fourth-order geometric partial differential equation (PDE) and obeys the energy dissipation law 
\begin{equation}
\ddt\left[\frac{1}{2}\int_{\mS(t)}(\varkappa_{_\mS} -\bkap)^2\rd A\right] = -\int_{_{\mS(t)}}[\mathcal{V}_{_{\mS}}]^2\rd A\leq 0.\label{eq:WillED}
\end{equation}
For analytical results of Willmore flow, we refer the reader to \cite{Kuwert01willmore, DziukKS02, KS02, Simonett05willmore, Blatt09singular, Dall24willmore,Schlierf25spont} and references therein. From the numerical analysis point of view, it is of great interest to design numerical methods for the flow \eqref{eqn:WillmoreS} such that gradient flow structure \eqref{eq:WillED} can be mimicked on the fully discrete level. 

In this work, we will focus on parametric approximations of Willmore flow, and for other numerical approaches, we refer the reader to the review articles \cite{DeckelnickDE05,Barrett20} and references therein.  Earlier parametric approximations for Willmore flow have been considered in the context of curves \cite{DziukKS02} and surfaces in \cite{Rusu05}. With the help of the (mean) curvature vector, both methods use mixed formulations to allow piecewise linear elements for the spatial discretization, but fail to satisfy energy stability estimates on the discrete level. Later Dziuk \cite{Dziuk08} derived a new weak formulation which leads to a stable semidiscrete finite element approximation.  Recently, Kov\'acs, Li and Lubich considered a new formulation for Willmore flow by including the evolution of geometric quantities \cite{KovacsLL19}. This allows for a convergence proof of a finite element method for Willmore flow of surfaces.  Nevertheless, all of the aforementioned approaches discretize a purely normal flow, which often leads to coalescence of vertices and degenerate surface meshes.

A natural remedy to prevent mesh degenerations during simulations is to incorporate appropriate tangential velocity in the flow, see \cite{BGN07,Hu22evolving,DeTurck17,Duan24new}. Barrett, Garcke, and N\"urnberg (BGN) introduced a weak formulation that allows tangential degrees of freedom so that the mesh quality can be improved. In particular, the BGN tangential motion leads to an equidistribution of mesh points \cite{BGN07} on the semidiscrete level for the evolving polygonal curves. The interested reader is referred to the recent review paper \cite{Barrett20} for a detailed discussion. The BGN approach was employed for Willmore flow of curves\slash surfaces in \cite{BGN07,BGN08willmore,curves3d}, but while the methods are practical, no stability estimates are available. Building upon the original idea in \cite{Dziuk08}, adapted BGN schemes
were introduced in \cite{pwf,pwfade,pwfopen} which preserve the energy dissipation property on the semidiscrete level. Inspired by work in \cite{KovacsLL19}, recently Bao and Li introduced a novel numerical scheme for the Willmore flow of planar curves, which achieves energy stability for fully discrete parametric approximations \cite{BaoL25}. The new scheme from \cite{BaoL25} incorporates tangential motion into its framework; however it may still require mesh redistributions for complex curve evolutions. More recently, the present authors combined the idea from \cite{BaoL25} and the tangential motion from the BGN approach to obtain fully discrete finite element approximations that satisfy an unconditional stability estimate in terms of the energy, and in addition satisfy an equidistribution property for the mesh points of the polygonal curves \cite{GNZ25willmore}.  

It is the main aim of this paper to adapt the idea in \cite{GNZ25willmore} and devise fully finite element approximations for the axisymmetric Willmore flow of surfaces. The axisymmetric setting reduces the geometric flow into a one-dimensional problem by simply dealing with the generating curve of the axisymmetric surfaces. There are several existing numerical works for axisymmetric Willmore flow. One of the earliest contributions is the finite difference approximation employed by Mayer and Simonett in \cite{Mayer02numerical}, which provides numerical evidence for the formation of singularities in Willmore flow. The BGN formulation was also considered for axisymmetric surfaces \cite{BGN19asy,BGN21stable}, and a stability result can be shown on the semidiscrete level in \cite{BGN21stable}. Parametric approximations for axisymmetric surfaces based on the work in \cite{BaoL25} were analyzed in the recent works \cite{Ma25energy,LL25axis}, but these still suffer from strong mesh distortions. For example, they appear to be unable to accurately capture the stationarity of the Clifford torus. 

In the current work, we would like to incorporate our recent idea from \cite{GNZ25willmore} and explore parametric finite element approximations for  Willmore flow of closed surfaces in the axisymmetric setting. We will also include spontaneous curvature effects and consider the geometric flow \eqref{eqn:WillmoreS} in the case when $\mS(t)$ is an axisymmetric surface that is rotationally symmetric with respect to the $x_2$–axis, as shown in Fig.~\ref{fig:sketch}. The key idea of our approach is to introduce a new geometric PDE for the axisymmetric Willmore flow of surfaces. This relies on an evolution equation for the mean curvature of the surface and a curvature formulation of the generating curve. In particular, the first curvature discretization approximates the gradient flow structure and allows for an unconditional stability estimate on the discrete level. Discretization of the second curvature not only helps to compute the Gauss curvature but also introduces the desired BGN tangential motion to enable an asymptotically equidistribution property of mesh points on the discrete curves. 

The rest of the paper is organized as follows. We begin in Section~\ref{sec:mf} with the axisymmetric setting. There we introduce a new geometric PDE for Willmore flow of axisymmetric surfaces and propose a novel weak formulation for it. Next in Section~\ref{sec:semi},  we consider a semidiscrete approximation of the weak formulation and show that the approximation satisfies a stability estimate and an equidistribution property. Subsequently in Section~\ref{sec:fulld} we devise two fully discrete finite element schemes and prove their unconditional energy stability. Numerical examples are reported in Section~\ref{sec:numr} to examine the convergence, stability and good mesh properties of the introduced schemes. Finally we draw some conclusions in Section~\ref{sec:con}.

\section{Mathematical formulation}
\label{sec:mf}

\subsection{The axisymmetric setting}

\begin{figure}[!htp]
\center
\newcommand{\AxisRotator}[1][rotate=0]{%
    \tikz [x=0.25cm,y=0.60cm,line width=.2ex,-stealth,#1] \draw (0,0) arc (-150:150:1 and 1);%
}
\begin{tikzpicture}[every plot/.append style={very thick}, scale = 1]
\begin{axis}[axis equal,axis line style=thick,axis lines=center, xtick style ={draw=none}, 
ytick style ={draw=none}, xticklabels = {}, 
yticklabels = {}, 
xmin=-0.2, xmax = 0.8, ymin = -0.4, ymax = 2.55]
after end axis/.code={  
   \node at (axis cs:0.0,1.5) {\AxisRotator[rotate=-90]};
   \draw[blue,->,line width=2pt] (axis cs:0,0) -- (axis cs:0.5,0);
   \draw[blue,->,line width=2pt] (axis cs:0,0) -- (axis cs:0,0.5);
   \node[blue] at (axis cs:0.5,-0.2){$\vec\ek_1$};
   \node[blue] at (axis cs:-0.2,0.5){$\vec\ek_2$};
   \draw[red,very thick] (axis cs: 0,0.7) arc[radius = 70, start angle= -90, end angle= 90];
    \draw[blue,->,line width=1.2pt] (axis cs:0.7*0.950, 1.3+0.5*0.7) -- (axis cs:1.15*0.950,1.3+0.5*1.15);
   \node[blue] at (axis cs:0.96,1.65){$\vec\nu$};
   \node[red] at (axis cs:0.7,1.9){$\Gamma$};
}
\end{axis}
\end{tikzpicture} \qquad \qquad
\tdplotsetmaincoords{120}{50}
\begin{tikzpicture}[scale=2, tdplot_main_coords,axis/.style={->},thick]
\draw[axis] (-1, 0, 0) -- (1, 0, 0);
\draw[axis] (0, -1, 0) -- (0, 1, 0);
\draw[axis] (0, 0, -0.2) -- (0, 0, 2.7);
\draw[blue,->,line width=2pt] (0,0,0) -- (0,0.5,0) node [below] {$\vec\ek_1$};
\draw[blue,->,line width=2pt] (0,0,0) -- (0,0.0,0.5);
\draw[blue,->,line width=2pt] (0,0,0) -- (0.5,0.0,0);
\node[blue] at (0.2,0.4,0.1){$\vec\ek_3$};
\node[blue] at (0,-0.2,0.3){$\vec\ek_2$};
\node[red] at (0.7,0,1.9){$\mathcal{S}$};
\node at (0.0,0.0,2.4) {\AxisRotator[rotate=-90]};

\tdplottransformmainscreen{0}{0}{1.4}
\shade[tdplot_screen_coords, ball color = red] (\tdplotresx,\tdplotresy) circle (0.7);
\end{tikzpicture}
\caption{Sketch of $\Gamma$ and $\mathcal{S}$, as well as 
the unit vectors $\vec\ek_1$, $\vec\ek_2$ and $\vec\ek_3$.}
\label{fig:sketch}
\end{figure}

We consider the axisymmetric case and assume that the surface $\mS(t)$ satisfies the rotational symmetry with respect to the $x_2$-axis, as shown in Fig.~\ref{fig:sketch}. In what follows our presentation will closely follow the notation introduced in \cite{BGN19asy}. 
Denote by $\Gamma(t)$ the generating curve of $\mS(t)$, and its parameterization over $\mathbb{I}$ is given by 
\begin{equation}
\vec x(\rho,t)= \bigl(r(\rho,t),~z(\rho,t)\bigr)^T:\bar{\mathbb{I}}\times [0,T]\mapsto\bRgeq\times\bR,\nn
\end{equation}
where the reference domain $\mathbb{I}$ is either the periodic unit interval or the unit interval with boundary: 
\[\mathbb{I} = \bR\slash\bZ\quad\mbox{with}\quad \partial\mathbb{I} = \emptyset,\qquad\mbox{or}\qquad \mathbb{I} = (0,1)\quad\mbox{with}\quad \partial\mathbb{I}=\{0,~1\}.\]
Then an induced parameterization of the axisymmetric surface is given by
\begin{equation}
(\rho,\theta, t)\mapsto\vec y(\rho,\theta, t) =\bigl(r(\rho,t)\cos\theta,~z(\rho,t),~r(\rho,t)\sin\theta\bigr)^T,\quad \rho\in \bar{\bI}, \quad \theta\in[0,2\pi],\nn
\end{equation}
for $t\in[0,T]$, and $\theta$ is the azimuthal angle. Here, we allow the generating curve $\Gamma(t)$ to be either a closed curve, parameterized over $\bR/\bZ$, which corresponds to $\mS(t)$ being a genus-1 surface without boundary. Alternatively, $\Gamma(t)$ may be an open curve, parameterized over $[0,1]$ with two endpoints attached to the rotational axis $x_2$. This leads to a genus-0 surface without boundary, as depicted in Fig.~\ref{fig:sketch}. Overall,  we assume that the parameterization $\vec x$ satisfies 
\begin{equation}
\vec x(\rho,t)\cdot\vec e_1>0\qquad\forall\rho\in\bar{\bI}\setminus\partial\bI\quad\mbox{for all}\quad t\in[0,T]. 
\end{equation}

We assume $|\vec x_\rho|>0$ in $\bar{\mathbb{I}}$ and introduce the arclength $s$ of the curve $\Gamma(t)$, i.e., $\partial_s= |\vec x_\rho|^{-1}\partial_\rho$.  We also define the unit tangent and normal to $\Gamma(t)$ as
\begin{equation}
\vec\tau= \vec x_s = |\vec x_\rho|^{-1}\vec x_\rho,\qquad\vec\nu = -(\vec\tau)^\perp\qquad\rho\in\bar{\mathbb{I}},
\end{equation}
where $(\cdot)^\perp$ represents a clockwise rotation by $\frac{\pi}{2}$. We always choose the sign convention such that $\vec\nu$ points outwards. We denote by $\kappa$ the curvature of the curve $\Gamma(t)$\begin{equation}
\kappa\,\vec\nu = \vec x_{ss},
\end{equation}
so that $\kappa=-1$ if $\Gamma(t)$ is a unit circle.

Due to the axisymmetry, the normal velocity  $\mathcal{V}_{_\mS}$ and the mean curvature $\varkappa_{_\mS}$ of the surface $\mS(t)$ are independent of the azimuthal angle $\theta$. Thus it is natural to consider their corresponding variants that are defined on $\bI$. To this end, we introduce $\mathscr{V}(\rho,t) = \mathcal{V}_{_\mS}\circ \Bigl(\begin{matrix}\vec x(\rho,t)\\ 0\end{matrix}\Bigr)$ and  $\varkappa(\rho,t) = (\varkappa_{_\mS})\circ\Bigl(\begin{matrix}\vec x(\rho,t)\\ 0\end{matrix}\Bigr)$, and it holds that
\begin{equation}\label{eq:vkappa}
\mathscr{V} = \vec x_t\cdot\vec\nu,\qquad \varkappa = \kappa  +  \zeta= \kappa - \frac{\vec\nu\cdot\vec e_1}{\vec x\cdot\vec e_1},
\end{equation}
where $\kappa(\rho,t)$ and $\zeta(\rho,t)$ are the two principal curvatures in the in-plane and azimuthal directions, respectively. 

 We have the following lemma for the time derivative of the curvature quantities.
\begin{lem}\label{lem:timecur} It holds that
\begin{subequations}
\begin{align}
\label{eq:kappa_t}
\kappa_t &= \mV_{ss} + \mV\,\kappa^2 + (\vec x_t\cdot\vec\tau)\,\kappa_s,\\
\zeta_t & = \frac{\vec\tau\cdot\vec\ek_1}{\vec x\cdot\vec\ek_1}\mV_s+\mV\,\zeta^2 + (\vec x_t\cdot\vec\tau)\,\zeta_s,
\label{eq:zeta_t}
\end{align}
which implies that 
\begin{equation}
\label{eq:varkappa_t}
\varkappa_t = \frac{1}{\vec x\cdot\vec e_1}\left(\vec x\cdot\vec e_1\,\mathscr{V}_s\right)_s +\mathscr{V}\left(\varkappa^2+\frac{2\,\kappa\,\vec\nu\cdot\vec e_1}{\vec x\cdot\vec e_1}\right) + (\vec x_t\cdot\vec\tau)\,\varkappa_s.
\end{equation}
\end{subequations}
\end{lem}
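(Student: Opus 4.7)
The plan is to set up the kinematic identities for an evolving parameterized planar curve with velocity $\vec x_t = \mV\vec\nu + T\vec\tau$, where $T := \vec x_t\cdot\vec\tau$, and then to deduce each of \eqref{eq:kappa_t}--\eqref{eq:varkappa_t} by direct computation. First I would differentiate $|\vec x_\rho|^2 = \vec x_\rho\cdot\vec x_\rho$ in $t$ and use $\vec\tau_s = \kappa\vec\nu$ to obtain $|\vec x_\rho|_t = |\vec x_\rho|(T_s - \mV\kappa)$. This immediately yields the commutator identity $\partial_t\partial_s f - \partial_s\partial_t f = (\mV\kappa - T_s)\partial_s f$ for scalar or vector-valued $f$. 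Applying $\partial_t$ to $\vec\tau = \vec x_s$ and invoking the commutator together with $\vec\tau_s = \kappa\vec\nu$ and $\vec\nu_s = -\kappa\vec\tau$ produces $\vec\tau_t = (\mV_s + T\kappa)\vec\nu$; orthonormality of the frame then forces $\vec\nu_t = -(\mV_s + T\kappa)\vec\tau$.

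For \eqref{eq:kappa_t} I would start from $\kappa = \vec\tau_s\cdot\vec\nu$, differentiate in $t$, and apply the commutator to rewrite $\partial_t\partial_s\vec\tau = \partial_s\partial_t\vec\tau + (\mV\kappa - T_s)\partial_s\vec\tau$. Substituting the expressions for $\vec\tau_t$ and $\vec\nu_t$ and pairing with $\vec\nu$, the $T_s$-contributions cancel and the tangential piece $\vec\tau_s\cdot\vec\nu_t$ vanishes, producing $\kappa_t = \mV_{ss} + \mV\kappa^2 + T\kappa_s$. For \eqref{eq:zeta_t} I would differentiate the closed form $\zeta = -(\vec\nu\cdot\vec e_1)/(\vec x\cdot\vec e_1)$ directly, again using $\vec\nu_t = -(\mV_s + T\kappa)\vec\tau$ and $\vec x_t = \mV\vec\nu + T\vec\tau$; the resulting expression is then compared with $\zeta_s$, computed from $\vec\nu_s = -\kappa\vec\tau$ and $\vec x_s = \vec\tau$. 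The combination separates cleanly into a pure $\mV_s$-term of the advertised form $(\vec\tau\cdot\vec e_1)\mV_s/(\vec x\cdot\vec e_1)$, a term $\mV(\vec\nu\cdot\vec e_1)^2/(\vec x\cdot\vec e_1)^2 = \mV\zeta^2$, and a convective part $T\zeta_s$.

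Equation \eqref{eq:varkappa_t} then follows by adding \eqref{eq:kappa_t} and \eqref{eq:zeta_t}: one uses $\varkappa = \kappa+\zeta$ to rewrite $\mV(\kappa^2+\zeta^2) = \mV\varkappa^2 - 2\mV\kappa\zeta = \mV\varkappa^2 + 2\mV\kappa(\vec\nu\cdot\vec e_1)/(\vec x\cdot\vec e_1)$, and recognizes $\mV_{ss} + (\vec\tau\cdot\vec e_1)\mV_s/(\vec x\cdot\vec e_1)$ as $(\vec x\cdot\vec e_1)^{-1}\bigl((\vec x\cdot\vec e_1)\mV_s\bigr)_s$ via the product rule together with $(\vec x\cdot\vec e_1)_s = \vec\tau\cdot\vec e_1$. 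The main obstacle I anticipate is the careful bookkeeping of the tangential velocity $T$ through the commutator: textbook derivations of curve evolution usually assume a purely normal flow ($T=0$), so one must verify in each of the three identities that all extra $T$-dependent contributions reassemble exactly as the convective derivative $T\,\partial_s$ of the relevant curvature, rather than polluting the principal-term structure; the sign convention $\kappa\vec\nu = \vec x_{ss}$ with the clockwise-rotated normal is a secondary care point which fixes the sign in $\vec\nu_s = -\kappa\vec\tau$ and hence in the commutator contribution to $\kappa_t$.
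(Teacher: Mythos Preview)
Your proposal is correct and follows essentially the same route as the paper: you establish the commutator $\partial_t\partial_s = \partial_s\partial_t + (\mV\kappa - T_s)\partial_s$ and the frame evolution $\vec\tau_t=(\mV_s+T\kappa)\vec\nu$, $\vec\nu_t=-(\mV_s+T\kappa)\vec\tau$, then compute $\zeta_t$ directly from $\zeta=-(\vec\nu\cdot\vec e_1)/(\vec x\cdot\vec e_1)$ and compare with $\zeta_s$, and finally sum to get \eqref{eq:varkappa_t} via $\kappa^2+\zeta^2=\varkappa^2-2\kappa\zeta$ and the product-rule identity for $(\vec x\cdot\vec e_1)^{-1}\bigl((\vec x\cdot\vec e_1)\mV_s\bigr)_s$. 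The only difference is that the paper omits the derivation of \eqref{eq:kappa_t} by citing earlier work, whereas you sketch it explicitly from $\kappa=\vec\tau_s\cdot\vec\nu$; your computation there is correct and the $T_s$-terms indeed cancel as you anticipate.
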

\begin{proof}
We note that \eqref{eq:kappa_t} follows directly from \cite[Lemma 39(i)]{Barrett20}, see also \cite[Lemma 2.1]{GNZ25willmore}. So we omit the proof here, and only prove \eqref{eq:zeta_t}.

We have that the following identities hold 
\begin{subequations}
\begin{alignat}{3}
\label{eq:taunus}
\vec\tau_s &= \kappa\,\vec\nu,\qquad &&\vec\nu_s = -\kappa\,\vec\tau,\\
\vec\tau_t & = [\mV_s + (\vec x_t\cdot\vec\tau)\kappa]\,\vec\nu,\qquad  &&\vec\nu_t = -[\mV_s + (\vec x_t\cdot\vec\tau)\,\kappa]\,\vec\tau.
\label{eq:taunut}
\end{alignat}
\end{subequations}
Here \eqref{eq:taunus} are the well-known Frenet–Serret formulas while the first identity in \eqref{eq:taunut} can be derived based on the property $\partial_t\partial_s = \partial_s\partial_t - ([\vec x_t]_s\cdot\vec\tau)\,\partial_s$, see \cite[(2.9)]{GNZ25willmore}. The second identity in \eqref{eq:taunut} can be obtained by applying $(\cdot)^\perp$ on $\vec\tau_t$.

Then direct calculation yields
\begin{align}
\zeta_t &=-\left(\frac{\vec\nu\cdot\vec e_1}{\vec x\cdot\vec e_1}\right)_t =-\frac{\vec\nu_t\cdot\vec e_1}{\vec x\cdot\vec e_1}+\frac{\vec\nu\cdot\vec e_1\,(\vec x_t\cdot\vec e_1)}{(\vec x\cdot\vec e_1)^2}\nn\\
&= \frac{[\mV_s +(\vec x_t\cdot\vec\tau)\,\kappa]\,(\vec\tau\cdot\vec e_1)}{\vec x\cdot\vec e_1} + \frac{\vec\nu\cdot\vec e_1\,[\vec x_t\cdot\vec\nu\,(\vec\nu\cdot\vec e_1) + \vec x_t\cdot\vec\tau\,(\vec\tau\cdot\vec e_1)]}{(\vec x\cdot\vec e_1)^2}\nn\\
&= \frac{\vec\tau\cdot\vec e_1}{\vec x\cdot\vec e_1}\,\mV_s + \mV\,\zeta^2 + (\vec x_t\cdot\vec\tau)\left[\frac{\kappa\,(\vec\tau\cdot\vec e_1)}{\vec x\cdot\vec e_1} + \frac{\vec\nu\cdot\vec e_1\,(\vec\tau\cdot\vec e_1)}{(\vec x\cdot\vec e_1)^2}\right],\label{eq:zetat1}
\end{align}
where in the second equality we recall \eqref{eq:taunut} as well as the decomposition of $\vec x_t = (\vec x_t\cdot\vec\nu)\,\vec\nu + (\vec x_t\cdot\vec\tau)\,\vec\tau$.
Moreover, it holds that
\begin{equation} \label{eq:zetas1}
 \zeta_s = -\left(\frac{\vec\nu\cdot\vec e_1}{\vec x\cdot\vec e_1}\right)_s =-\frac{\vec\nu_s\cdot\vec e_1}{\vec x\cdot\vec e_1}+\frac{\vec\nu\cdot\vec e_1\,(\vec x_s\cdot\vec e_1)}{(\vec x\cdot\vec e_1)^2}
= \frac{\kappa\,(\vec\tau\cdot\vec e_1)}{\vec x\cdot\vec e_1} + \frac{\vec\nu\cdot\vec e_1\,(\vec\tau\cdot\vec e_1)}{(\vec x\cdot\vec e_1)^2},
\end{equation}
where we recall \eqref{eq:taunus}. Then inserting \eqref{eq:zetas1} into \eqref{eq:zetat1} gives rise to \eqref{eq:zeta_t}. 

Finally, combining \eqref{eq:kappa_t} and \eqref{eq:zeta_t} and noting that $\kappa^2+\zeta^2 =  \varkappa^2 - 2\kappa\,\zeta$ yields \eqref{eq:varkappa_t} as claimed. 
\end{proof}

\begin{rem} It is also possible to consider the time derivative of the mean curvature $\varkappa_{_\mS}$ of the surface $\mS(t)$, see \cite[Lemma 39(i)]{Barrett20}. Then applying the axisymmetric reduction to the time derivative of $\varkappa_{_\mS}$ yields \eqref{eq:varkappa_t} as well. 
\end{rem}

\subsection{Axisymmetric Willmore flow}

In the axisymmetric setting, the total energy of the system in \eqref{eq:energyS} can be reduced to 
\begin{equation}\label{eq:energy}
E_{\bkap}(\vec x(t), \varkappa(t)) = \frac{1}{2}\int_{\mS(t)}(\varkappa_{_\mS}-\bkap)^2\rd A = \pi\int_{\mathbb{I}}\vec x\cdot\vec e_1\,(\varkappa-\bkap)^2|\vec x_\rho|\rd\rho,
\end{equation}
where for simplicity we denote $\vec x(t) = \vec x(\rho, t)$ and $\varkappa(t) = \varkappa(\rho,t)$. 
Similarly, the Willmore flow in \eqref{eqn:WillmoreS} can be reduced to an evolution equation in terms of the generating curve as \cite{BGN19asy}
\begin{align}
\vec x_t\cdot\vec\nu   &= - \frac{1}{\vec x\cdot\vec e_1}\left(\vec x\cdot\vec e_1\,\varkappa_s\right)_s - \frac{2\,\kappa\,\vec\nu\cdot\vec e_1}{\vec x\cdot\vec e_1}\,(\varkappa-\bkap) - \frac{1}{2}(\varkappa^2-\bkap^2)\,\varkappa\qquad\mbox{in}\quad \bI\times(0,T],\label{eqn:willmoreA}
\end{align}
together with the boundary conditions
\begin{subequations}\label{eqn:bd}
\begin{alignat}{3}\label{eq:bd1}
\vec x(\rho,t)\cdot\vec e_1 &= 0\qquad && \mbox{on}\quad\partial\bI\times(0,T],\\
\label{eq:bd2}
\vec x_\rho(\rho,t)\cdot\vec e_2 &= 0\qquad && \mbox{on}\quad\partial\bI\times(0,T],\\
\label{eq:bd3}
\varkappa_\rho &= 0\qquad && \mbox{on}\quad\partial\bI\times(0,T],
\end{alignat}
\end{subequations}
where \eqref{eq:bd1} is the attachment condition and \eqref{eq:bd2} and \eqref{eq:bd3} result from the axisymmetry. 

On recalling Lemma \ref{lem:timecur}, any parameterization $\vec x$ that solves \eqref{eqn:willmoreA} with boundary conditions \eqref{eqn:bd} also solves the following equations
\begin{subequations} \label{eqn:willmore}
\begin{align}\label{eq:willmore1}
 \vec x\cdot\vec e_1\, \mathscr{V}  &=  -\left[\vec x\cdot\vec e_1\,\varkappa_s\right]_s - 2\,\kappa\,(\vec\nu\cdot\vec e_1)(\varkappa-\bkap) - \frac{1}{2}(\vec x\cdot\vec e_1)\,(\varkappa^2-\bkap^2)\,\varkappa,\\
 \label{eq:willmore2}
\vec x\cdot\vec e_1\,\varkappa_t  &= \left[(\vec x\cdot\vec e_1)\,\mathscr{V}_s\right]_s+2\,\kappa\,(\vec\nu\cdot\vec e_1)\,\mathscr{V} + \vec x\cdot\vec e_1\mathscr{V}\,\varkappa^2 + \vec x\cdot\vec e_1(\vec x_t\cdot\vec\tau)\,\varkappa_s,\\
\label{eq:willmore3}
\vec x_t\cdot\vec\nu  &= \mathscr{V},\\
\label{eq:willmore4}
\kappa\,\vec\nu  &= \vec x_{ss},
\end{align}
\end{subequations}
in $\mathbb{I}\times(0,T]$ together with boundary conditions \eqref{eqn:bd}. It follows from \eqref{eq:willmore3} that 
\begin{equation}
\mV_\rho  = (\vec x_t\cdot\vec\nu)_\rho= [\vec x_t]_\rho\cdot\vec\nu + \vec x_t\cdot\vec\nu_\rho = [\vec x_\rho]_t\cdot\vec\nu -\kappa\,|\vec x_\rho|\,(\vec x_t\cdot\vec\tau).\label{eq:vrho}
\end{equation}
On recalling from \eqref{eq:bd2} that $\vec\tau\cdot\vec e_2=0$ on 
$\partial\bI$, we see that differentiating \eqref{eq:bd1} and \eqref{eq:bd2}
with respect to time then yields that $\mV_\rho = 0$ on 
$\partial\bI$. Hence it is natural to include the following boundary condition
\begin{equation}\label{eq:bdV}
\mV_\rho = 0\qquad\mbox{on}\quad\partial\bI\times(0,T].
\end{equation}
In order for the system to be complete, we also need the initial parameterization $\vec x(\cdot, 0)$ and the initial curvature $\varkappa(\cdot, 0)$.

\subsection{Weak formulation}
Following the work in \cite{GNZ25willmore}, we have the following lemma for the convective term in \eqref{eq:willmore2}. 
\begin{lem}It holds that for any $\chi\in H^1(\bI)$
\begin{align}\label{eq:anti}
&\int_\bI\vec x\cdot\vec e_1\,(\vec x_t\cdot\vec\tau)\,\varkappa_\rho\,\chi\rd\rho = 
-\frac{1}{2}\int_\bI\left\{\vec x_t\cdot\vec e_1|\vec x_\rho| + \vec x\cdot\vec e_1\,([\vec x_t]_\rho\cdot\vec\tau)\right\}\,(\varkappa-\bkap)\,\chi\rd\rho\\
& -\frac{1}{2}\int_\bI\vec x\cdot\vec e_1\,(\vec x_t\cdot\vec\nu)\,\varkappa\,(\varkappa-\bkap)\,\chi\,|\vec x_\rho|\rd\rho + \frac{1}{2}\int_\bI\vec x\cdot\vec e_1\,(\vec x_t\cdot\vec\tau)\,\left[\varkappa_\rho\,\chi-(\varkappa-\bkap)\chi_\rho\right]\rd\rho.\nn
\end{align}
\end{lem}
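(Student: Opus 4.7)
The left-hand side has $\varkappa_\rho$ on which we cannot integrate by parts directly without generating unwanted terms. The trick, as in \cite{GNZ25willmore}, is to split symmetrically: since $\bkap$ is a constant, $\varkappa_\rho=(\varkappa-\bkap)_\rho$, so I write
\[
\int_\bI\vec x\cdot\vec e_1\,(\vec x_t\cdot\vec\tau)\,\varkappa_\rho\,\chi\rd\rho = \tfrac{1}{2}\int_\bI\vec x\cdot\vec e_1\,(\vec x_t\cdot\vec\tau)\,\varkappa_\rho\,\chi\rd\rho+\tfrac{1}{2}\int_\bI\vec x\cdot\vec e_1\,(\vec x_t\cdot\vec\tau)\,(\varkappa-\bkap)_\rho\,\chi\rd\rho,
\]
keep the first half as it is (it already appears on the right-hand side), and integrate the second half by parts. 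The boundary term vanishes: either $\partial\bI=\emptyset$, or by the attachment condition \eqref{eq:bd1} the factor $\vec x\cdot\vec e_1$ vanishes on $\partial\bI$.

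Next I expand $[\vec x\cdot\vec e_1\,(\vec x_t\cdot\vec\tau)\,\chi]_\rho$ by the product rule into four pieces. Using $(\vec x\cdot\vec e_1)_\rho=|\vec x_\rho|(\vec\tau\cdot\vec e_1)$ and the Frenet--Serret relation $\vec\tau_\rho=\kappa\,|\vec x_\rho|\,\vec\nu$ from \eqref{eq:taunus}, I obtain
\[
[\vec x\cdot\vec e_1\,(\vec x_t\cdot\vec\tau)\,\chi]_\rho = |\vec x_\rho|(\vec\tau\cdot\vec e_1)(\vec x_t\cdot\vec\tau)\chi + \vec x\cdot\vec e_1\,([\vec x_t]_\rho\cdot\vec\tau)\,\chi + \kappa\,|\vec x_\rho|\,(\vec x\cdot\vec e_1)\,(\vec x_t\cdot\vec\nu)\,\chi + \vec x\cdot\vec e_1\,(\vec x_t\cdot\vec\tau)\,\chi_\rho.
\]
The second and fourth terms already match exactly the corresponding terms on the right-hand side (up to the factor $-\tfrac{1}{2}(\varkappa-\bkap)$). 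The work is in the first and third terms.

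For the first term I decompose $\vec x_t=(\vec x_t\cdot\vec\tau)\vec\tau+(\vec x_t\cdot\vec\nu)\vec\nu$, which gives
\[
(\vec\tau\cdot\vec e_1)(\vec x_t\cdot\vec\tau) = \vec x_t\cdot\vec e_1 - (\vec\nu\cdot\vec e_1)(\vec x_t\cdot\vec\nu).
\]
The $\vec x_t\cdot\vec e_1$ piece produces the first desired term. The leftover $-|\vec x_\rho|(\vec\nu\cdot\vec e_1)(\vec x_t\cdot\vec\nu)\chi$ now combines with the third term $\kappa\,|\vec x_\rho|(\vec x\cdot\vec e_1)(\vec x_t\cdot\vec\nu)\chi$ via the key identity \eqref{eq:vkappa}, namely $-(\vec\nu\cdot\vec e_1) + \kappa\,(\vec x\cdot\vec e_1) = \varkappa\,(\vec x\cdot\vec e_1)$, to produce exactly $|\vec x_\rho|\,(\vec x\cdot\vec e_1)\,(\vec x_t\cdot\vec\nu)\,\varkappa\,\chi$. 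Multiplying through by $-\tfrac{1}{2}(\varkappa-\bkap)$ and integrating recovers the $-\tfrac{1}{2}\int(\vec x\cdot\vec e_1)(\vec x_t\cdot\vec\nu)\varkappa(\varkappa-\bkap)\chi\,|\vec x_\rho|$ term on the right.

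The computation is essentially algebraic; the main (mild) obstacle is simply recognising the right regrouping at the final step: that combining $-(\vec\nu\cdot\vec e_1)$ with $\kappa(\vec x\cdot\vec e_1)$ collapses into the mean curvature factor $\varkappa(\vec x\cdot\vec e_1)$ through the axisymmetric identity $\varkappa=\kappa+\zeta$. Once this is spotted, collecting the remaining pieces and prefixing the $-\tfrac{1}{2}(\varkappa-\bkap)$ factor reproduces precisely the four terms on the right-hand side, completing the proof.
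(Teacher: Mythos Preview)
Your proof is correct and follows essentially the same route as the paper: symmetrize $\varkappa_\rho\chi$, integrate one half by parts using the vanishing of $\vec x\cdot\vec e_1$ on $\partial\bI$, then expand the derivative via Frenet--Serret and the decomposition $\vec x_t=(\vec x_t\cdot\vec\tau)\vec\tau+(\vec x_t\cdot\vec\nu)\vec\nu$, finally collapsing $-(\vec\nu\cdot\vec e_1)+\kappa(\vec x\cdot\vec e_1)=\varkappa(\vec x\cdot\vec e_1)$ with \eqref{eq:vkappa}. The only cosmetic difference is that the paper groups $\chi$ with $(\varkappa-\bkap)$ before integrating by parts (differentiating $[\vec x\cdot\vec e_1(\vec x_t\cdot\vec\tau)]_\rho$), whereas you keep $\chi$ on the other side (differentiating $[\vec x\cdot\vec e_1(\vec x_t\cdot\vec\tau)\chi]_\rho$); the resulting $\chi_\rho$ term then recombines with your ``kept half'' to produce the antisymmetric bracket, which in the paper's organization is obtained directly from the algebraic identity \eqref{eq:anti1}.
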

\begin{proof}
The following identity holds for $\chi\in H^1(\bI)$
\begin{equation}
\label{eq:anti1}
\varkappa_\rho\,\chi = (\varkappa-\bkap)_\rho\,\chi = \frac{1}{2}\left[\varkappa_\rho\,\chi - (\varkappa-\bkap)\,\chi_\rho\right]+\frac{1}{2}[(\varkappa-\bkap)\,\chi]_\rho.
\end{equation}
Then we have 
\begin{align}
&\int_\bI\vec x\cdot\vec e_1\,(\vec x_t\cdot\vec\tau)\,\varkappa_\rho\,\chi\rd \rho  \nn\\
&= \frac{1}{2}\int_\bI\vec x\cdot\vec e_1\,(\vec x_t\cdot\vec\tau)\left\{\varkappa_\rho\chi - (\varkappa-\bkap)\,\chi_\rho + [(\varkappa-\bkap)
\chi]_\rho\right\}\rd\rho \nn\\
&=\frac{1}{2}\int_\bI\vec x\cdot\vec e_1\,(\vec x_t\cdot\vec\tau)[\varkappa_\rho\chi - (\varkappa-\bkap)\,\chi_\rho]\rd\rho -\frac{1}{2}\int_\bI\left[\vec x\cdot\vec e_1\,(\vec x_t\cdot\vec\tau)\right]_\rho\,(\varkappa-\bkap)
\chi\rd\rho,
\label{eq:anti2}
\end{align}
where for the last equality, we have used integration by parts and recalled the boundary condition \eqref{eq:bd1} on $\partial\bI$. Next we aim to compute $\left[\vec x\cdot\vec e_1\,(\vec x_t\cdot\vec\tau)\right]_\rho$ in the last term of \eqref{eq:anti2}. We recall the identity $\vec\tau_\rho  = \kappa\,\vec\nu\,|\vec x_\rho|$ in \eqref{eq:taunus} and obtain
\begin{align}
&\left[\vec x\cdot\vec e_1\,(\vec x_t\cdot\vec\tau)\right]_\rho =\vec x_\rho\cdot\vec e_1\,(\vec x_t\cdot\vec\tau)+\vec x\cdot\vec e_1\,([\vec x_t]_\rho\cdot\vec\tau)+\vec x\cdot\vec e_1\,(\vec x_t\cdot\vec\nu)\,\kappa\,|\vec x_\rho| \nn\\
&\quad = (\vec\tau\cdot\vec e_1)(\vec x_t\cdot\vec\tau)|\vec x_\rho| + \vec x\cdot\vec e_1\,([\vec x_t]_\rho)\cdot\vec\tau)+ \vec x\cdot\vec e_1\,(\vec x_t\cdot\vec\nu)\left(\varkappa+\frac{\vec\nu\cdot\vec e_1}{\vec x\cdot\vec e_1}\right)|\vec x_\rho|\nn\\
&\quad =\vec x_t\cdot\vec e_1|\vec x_\rho| + \vec x\cdot\vec e_1\,([\vec x_t]_\rho\cdot\vec\tau) + \vec x\cdot\vec e_1\,(\vec x_t\cdot\vec\nu)\,\varkappa\,|\vec x_\rho|,\label{eq:anti3}
 \end{align}
where we used \eqref{eq:vkappa} for the second equality and the last equality results from the fact that $(\vec\tau\cdot\vec e_1)(\vec x_t\cdot\vec\tau) + (\vec\nu\cdot\vec e_1)(\vec x_t\cdot\vec\nu) = \vec x_t\cdot\vec e_1$.

Now inserting \eqref{eq:anti3} into \eqref{eq:anti2} yields the desired result \eqref{eq:anti}. 
\end{proof}

In order to introduce the weak formulation, we define the function space
\begin{subequations}
\begin{align}
\pV= \bigl\{\vec\eta\in[H^1(\bI)]^2: \vec\eta(\rho)\cdot\vec e_1 = 0\quad\forall\rho\in\partial\bI\bigr\}.
\end{align}
\end{subequations}
We also introduce the $L^2$-inner product on $\bI$ as $(\cdot, \cdot)$. Then we propose our novel weak formulation of \eqref{eqn:willmore} as follows. Given the initial parameterization $\vec x(0)\in \pV$ and the curvature $\varkappa(0)$, for $t\in(0,T]$, we find $\mV(t)\in H^1(\bI)$, $\varkappa(t)\in H^1(\bI)$, $\vec x(t)\in\pV$ and $\kappa(t)\in H^1(\bI)$ such that
\begin{subequations}\label{eqn:weak}
\begin{align}\label{eq:weak1}
&\ipd{\vec x\cdot\vec e_1\,\mathscr{V},~\varphi\,|\vec x_\rho|}-\ipd{\vec x\cdot\vec e_1\,\varkappa_\rho,~\varphi_\rho\,|\vec x_\rho|^{-1}}+2\ipd{\vec\nu\cdot\vec e_1\,\kappa\,(\varkappa-\bkap),~\varphi\,|\vec x_\rho|}\nn\\
&\qquad+\frac{1}{2}\ipd{\vec x\cdot\vec e_1\,(\varkappa^2-\bkap^2)\varkappa,~\varphi\,|\vec x_\rho|}=0\qquad\forall\varphi\in H^1(\bI),\\[0.5em]
\label{eq:weak2}
&\ipd{\vec x\cdot\vec e_1\,\varkappa_t,~\chi\,|\vec x_\rho|}+\frac{1}{2}\ipd{\vec x_t\cdot\vec e_1\, |\vec x_\rho|+ \vec x\cdot\vec e_1\,([\vec x_t]_\rho\cdot\vec\tau),~(\varkappa-\bkap)\,\chi}\nn\\
&\qquad+\ipd{\vec x\cdot\vec e_1\,\mathscr{V}_\rho,~\chi_\rho\,|\vec x_\rho|^{-1}} - 2\ipd{\vec\nu\cdot\vec e_1\,\kappa\,\mV,~\chi\,|\vec x_\rho|}-\frac{1}{2}\ipd{\vec x\cdot\vec e_1\,(\varkappa+\bkap)\,\varkappa\,\mathscr{V},~\chi\,|\vec x_\rho|}\nn\\
&\qquad-\frac{1}{2}\ipd{\vec x\cdot\vec e_1\,(\vec x_t\cdot\vec\tau),~\varkappa_\rho\,\chi - (\varkappa-\bkap)\,\chi_\rho}=0\qquad\forall\chi\in H^1(\bI),\\[0.5em]
\label{eq:weak3}
&\ipd{\vec x_t\cdot\vec\nu,~\xi\,|\vec x_\rho|} - \ipd{\mV,~\xi\,|\vec x_\rho|}=0\qquad\forall\xi\in H^1(\bI),\\[0.5em]
&\ipd{\kappa\,\vec\nu,~\vec\eta\,|\vec x_\rho|} +\ipd{\vec x_\rho,~\vec\eta_\rho\,|\vec x_\rho|^{-1}}=0\qquad\forall\vec\eta\in\pV.
\label{eq:weak4}
\end{align}
\end{subequations}
Here \eqref{eq:weak1} is obtained by taking the inner product of \eqref{eq:willmore1} with $\varphi\,|\vec x_\rho|$ for $\varphi\in H^1(\bI)$, then performing integration by parts and noting the boundary condition \eqref{eq:bd3}. Similarly, we take the inner product of \eqref{eq:willmore2} with $\chi\,|\vec x_\rho|$ for $\chi\in H^1(\bI)$, recall \eqref{eq:anti} and use integration by parts together with the boundary condition \eqref{eq:bdV} to obtain \eqref{eq:weak2}. The equation \eqref{eq:weak3} is due to \eqref{eq:willmore3}, and \eqref{eq:weak4} results from \eqref{eq:willmore4}, the boundary condition in \eqref{eq:bd2} and the fact that $\vec\eta\in \pV$. 

We note here that the essential boundary condition \eqref{eq:bd1} is strongly enforced in the corresponding function space, while  \eqref{eq:weak4} weakly enforces  \eqref{eq:bd2}. However, due to the degenerate weight $\vec x\cdot\vec e_1$ on $\partial\bI$, it is not obvious that \eqref{eq:bd3} and \eqref{eq:bdV} are weakly enforced in \eqref{eq:weak1} and \eqref{eq:weak2}. Nevertheless, a rigorous proof can be made in a similar manner to \cite[Appendix A]{BGN19asy}.  

\begin{rem}We observe that equation \eqref{eq:weak4} serves a dual purpose. The first is to evaluate the curvature $\kappa$ of the generating curve $\Gamma(t)$, which is necessary in both \eqref{eq:weak1} and \eqref{eq:weak2}. The second is that a suitable discretization of \eqref{eq:weak4} yields the BGN tangential velocity, which leads to an equidistribution
property for the evolving curve on the semidiscrete level. 
\end{rem}

We have the following theorem for the energy dissipation law within the weak formulation. 
\begin{thm} A solution of \eqref{eqn:weak} satisfies 
\begin{equation}\label{eq:weakenergyD}
\ddt E_{\bkap}\left(\vec x(t),\varkappa(t)\right)+2\pi\,\ipd{\vec x\cdot\vec e_1\,\mathscr{V}^2,~\,|\vec x_\rho|}=0\qquad\forall t\in(0,T].
\end{equation}
\end{thm}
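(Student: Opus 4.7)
The plan is to differentiate the energy in time and then use the weak formulation to rewrite the resulting terms. Starting from the definition
\[
E_\bkap(\vec x(t),\varkappa(t)) = \pi\int_\bI \vec x\cdot\vec e_1\,(\varkappa-\bkap)^2\,|\vec x_\rho|\rd\rho,
\]
and using $\partial_t|\vec x_\rho| = [\vec x_t]_\rho\cdot\vec\tau$, differentiation gives
\[
\tfrac{1}{\pi}\ddt E_\bkap = \ipd{(\varkappa-\bkap)^2,\;\vec x_t\cdot\vec e_1\,|\vec x_\rho| + \vec x\cdot\vec e_1\,([\vec x_t]_\rho\cdot\vec\tau)} + 2\ipd{\vec x\cdot\vec e_1\,(\varkappa-\bkap)\,\varkappa_t,\;|\vec x_\rho|}.
\]
The goal is to show that the first term on the right-hand side cancels against a matching contribution from the second, leaving only $-2(\vec x\cdot\vec e_1\,\mV^2,\,|\vec x_\rho|)$.

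To handle the $\varkappa_t$ term, I would test \eqref{eq:weak2} with the special choice $\chi = \varkappa-\bkap$. The key algebraic observation is that, with this choice, the antisymmetric pairing $\varkappa_\rho\,\chi - (\varkappa-\bkap)\,\chi_\rho$ vanishes identically, so the last line of \eqref{eq:weak2} drops out entirely. This is precisely what the carefully symmetrized convective term was designed for, and it mirrors the strategy used in \cite{GNZ25willmore}. What remains is an expression that already contains the desired term $\tfrac{1}{2}(\vec x_t\cdot\vec e_1\,|\vec x_\rho|+\vec x\cdot\vec e_1([\vec x_t]_\rho\cdot\vec\tau),\,(\varkappa-\bkap)^2)$ with the correct sign to cancel the first term of $\ddt E_\bkap$.

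Next, I would test \eqref{eq:weak1} with $\varphi = \mV$, which produces the identity
\[
\ipd{\vec x\cdot\vec e_1\,\varkappa_\rho,\;\mV_\rho\,|\vec x_\rho|^{-1}} = \ipd{\vec x\cdot\vec e_1\,\mV^2,\,|\vec x_\rho|} + 2\ipd{\vec\nu\cdot\vec e_1\,\kappa(\varkappa-\bkap),\,\mV|\vec x_\rho|} + \tfrac{1}{2}\ipd{\vec x\cdot\vec e_1(\varkappa^2-\bkap^2)\varkappa,\,\mV|\vec x_\rho|}.
\]
Substituting this into the expression obtained from \eqref{eq:weak2}, the two curvature coupling terms involving $\vec\nu\cdot\vec e_1\,\kappa$ and $(\varkappa^2-\bkap^2)\varkappa$ cancel pairwise, yielding
\[
\ipd{\vec x\cdot\vec e_1\,\varkappa_t,\,(\varkappa-\bkap)|\vec x_\rho|} = -\tfrac{1}{2}\ipd{\vec x_t\cdot\vec e_1\,|\vec x_\rho|+\vec x\cdot\vec e_1([\vec x_t]_\rho\cdot\vec\tau),\,(\varkappa-\bkap)^2} - \ipd{\vec x\cdot\vec e_1\,\mV^2,\,|\vec x_\rho|}.
\]
Feeding this into the differentiated energy identity gives \eqref{eq:weakenergyD} directly; equation \eqref{eq:weak3} enters implicitly via $\mV = \vec x_t\cdot\vec\nu$, while \eqref{eq:weak4} plays no role here beyond defining $\kappa$.

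The main obstacle is bookkeeping rather than anything deep: one must verify that the quadratic-in-$(\varkappa-\bkap)$ term produced by \eqref{eq:weak2} has exactly the right coefficient $\tfrac{1}{2}$ to cancel the boundary-weight variation coming from $\ddt E_\bkap$ (hence the factor of $\tfrac{1}{2}$ in the antisymmetrization \eqref{eq:anti}), and that the cubic curvature terms arising from \eqref{eq:weak1} and \eqref{eq:weak2} cancel pointwise. The particular pairing choices $\varphi=\mV$ and $\chi=\varkappa-\bkap$ were, of course, built into the design of the weak formulation for precisely this purpose.
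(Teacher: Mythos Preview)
Your proof is correct and follows essentially the same approach as the paper: test \eqref{eq:weak1} with $\varphi=\mV$ and \eqref{eq:weak2} with $\chi=\varkappa-\bkap$, observe that the antisymmetric term vanishes and the remaining curvature couplings cancel pairwise, and identify what is left as $\tfrac{1}{2}\ddt(\vec x\cdot\vec e_1\,|\vec x_\rho|,(\varkappa-\bkap)^2)+(\vec x\cdot\vec e_1\,\mV^2,|\vec x_\rho|)$. One small remark: \eqref{eq:weak3} and \eqref{eq:weak4} are in fact not used at all in the argument, so your comment that \eqref{eq:weak3} ``enters implicitly'' is unnecessary.
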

\begin{proof}
We set $\varphi = \mathscr{V}$ in \eqref{eq:weak1} and $\chi = \varkappa-\bkap$ in \eqref{eq:weak2} to obtain that
\begin{align}
0&=\ipd{\vec x\cdot\vec e_1\,\varkappa_t,~(\varkappa-\bkap)\,|\vec x_\rho|} + \frac{1}{2}\ipd{\vec x_t\cdot\vec e_1\,|\vec x_\rho| + \vec x\cdot\vec e_1\,([\vec x_t]_\rho\cdot\vec\tau),~(\varkappa-\bkap)^2} + \ipd{\vec x\cdot\vec e_1\,\mathscr{V},~\mathscr{V}\,|\vec x_\rho|}\nn\\
&=\dfrac{1}{2}\ddt\ipd{\vec x\cdot\vec e_1\,|\vec x_\rho|,~(\varkappa-\bkap)^2} + \ipd{\vec x\cdot\vec e_1\,\mathscr{V}^2,~|\vec x_\rho|},\nn
\end{align}
which yields \eqref{eq:weakenergyD} as claimed, on recalling \eqref{eq:energy}. 
\end{proof}

\section{Semidiscete scheme}
\label{sec:semi}
 
We consider a uniform partition of the reference domain $\bI$ into
\[\bI = \bigcup_{j=1}^J\bI_j=\bigcup_{j=1}^J[\rho_{j-1},~\rho_j]\quad\mbox{with}\quad\rho_j = j\,h,\quad h=\frac{1}{J},\]
and introduce the finite element spaces 
\begin{subequations}
\begin{align*}
V^h : = \bigl\{\chi\in C^0(\bI): \chi|_{\bI_j}\quad\mbox{is affine}\quad j = 1,\cdots, J\bigr\}\subset H^1(\bI),\qquad \pV^h:=[V^h]^2\cap\pV.
\end{align*}
\end{subequations}
We also define the mass-lumped $L^2$-inner product $(\cdot,\cdot)^h$ as
\begin{equation}
\ipd {u, v }^h = \frac{h}{2}\sum_{j=1}^J 
\left[(u\cdot v)(\rho_j^-) + (u\cdot v)(\rho_{j-1}^+)\right],\label{eq:masslumped}
\end{equation}
for scalar or vector valued functions $u,v$, which are piecewise continuous 
with possible jumps at the nodes $\{\rho_j\}_{j=1}^J$, and 
$u(\rho_j^\pm)=\underset{\delta\searrow 0}{\lim}\ u(\rho_j\pm\delta)$.

Let $(\vec X^h(t))_{t\in[0,T]}$ with $\vec X^h(t)\in \pV^h$ be an approximation to $(\vec x(t))_{t\in[0,T]}$. We also define $\Gamma^h(t) = \vec X^h(\mathbb{I},t)$ and assume that for all $t\in[0,T]$
\begin{equation*}
\vec X^h(\rho,t)\cdot\vec e_1>0,\quad\mbox{in}\quad\bar{\bI}\setminus\partial\bI; \qquad|\vec X_\rho^h|>0\quad\mbox{in}\quad\bar{\bI}.\nn
\end{equation*}
We also set 
\begin{equation*}
\vec\tau^h = \vec X^h_\rho\,|\vec X_\rho^h|^{-1},\qquad\vec\nu^h = -(\vec\tau^h)^\perp. 
\end{equation*}

Now we are ready to introduce the semidiscrete finite element approximation of the weak formulation \eqref{eqn:weak} as follows. Given the initial parameterization $\vec X^h(0)\in \pV^h$ with curvature $\varkappa^h(0)\in V^h$, for each $t\in(0,T]$, we seek for $\mV^h(t)\in V^h$, $\varkappa^h(t)\in V^h$, $\vec X^h(t)\in\pV^h$  and $\kappa^h(t)\in V^h$ such that
\begin{subequations}\label{eqn:semi}
\begin{align}\label{eq:semi1}
&\ipd{\vec X^h\cdot\vec e_1\,\mathscr{V}^h,~\varphi^h\,|\vec X^h_\rho|}-\ipd{\vec X^h\cdot\vec e_1\,\varkappa^h_\rho,~\varphi^h_\rho\,|\vec X^h_\rho|^{-1}}+2\ipd{\vec\nu^h\cdot\vec e_1\,\kappa^h\,(\varkappa^h-\bkap),~\varphi^h\,|\vec X^h_\rho|}\nn\\
&\qquad+\frac{1}{2}\ipd{\vec X^h\cdot\vec e_1\,[(\varkappa^h)^2-\bkap^2]\varkappa^h,~\varphi^h\,|\vec X^h_\rho|}=0\qquad\forall\varphi^h\in V^h,\\[0.5em]
\label{eq:semi2}
&\ipd{\vec X^h\cdot\vec e_1\,\varkappa^h_t,~\chi^h\,|\vec X^h_\rho|}+\frac{1}{2}\ipd{\vec X^h_t\cdot\vec e_1\, |\vec X^h_\rho|+ \vec X^h\cdot\vec e_1\,([\vec X^h_t]_\rho\cdot\vec\tau^h),~(\varkappa^h-\bkap)\,\chi^h}\nn\\
&\qquad+\ipd{\vec X^h\cdot\vec e_1\,\mathscr{V}^h_\rho,~\chi^h_\rho\,|\vec X^h_\rho|^{-1}} - 2\ipd{\vec\nu^h\cdot\vec e_1\,\kappa^h\,\mV^h,~\chi^h\,|\vec X^h_\rho|}\nn\\
&\qquad -\frac{1}{2}\ipd{\vec X^h\cdot\vec e_1\,(\varkappa^h+\bkap)\,\varkappa^h\,\mathscr{V}^h,~\chi^h\,|\vec X^h_\rho|}\nn\\
&\qquad-\frac{1}{2}\ipd{\vec X^h\cdot\vec e_1\,(\vec X^h_t\cdot\vec\tau^h),~\varkappa^h_\rho\,\chi^h - (\varkappa^h-\bkap)\,\chi^h_\rho}=0\qquad\forall\chi^h\in V^h,\\[0.5em]
\label{eq:semi3}
&\ipd{\vec X^h_t\cdot\vec\nu^h,~\xi^h\,|\vec X^h_\rho|}^h - \ipd{\mV^h,~\xi^h\,|\vec X^h_\rho|}=0\qquad\forall\xi^h\in V^h,\\[0.5em]
&\ipd{\kappa^h\,\vec\nu^h,~\vec\eta^h\,|\vec X^h_\rho|}^h +\ipd{\vec X^h_\rho,~\vec\eta^h_\rho\,|\vec X^h_\rho|^{-1}}=0\qquad\forall\vec\eta^h\in\pV^h.
\label{eq:semi4}
\end{align}
\end{subequations}
Here we employ the mass-lumped approximation in \eqref{eq:semi4} in order to lead to the desired BGN tangential motion, and then also in \eqref{eq:semi3} to allow for a well-posedness result on the fully discrete level.

\begin{thm} Let $\bigl(\mV^h(t),~\varkappa^h(t),~\vec X^h(t),~\kappa^h(t)\bigr)$ be a solution to the semidiscrete scheme \eqref{eqn:semi}. Then it holds for all $t\in(0,T]$ that
\begin{equation}\label{eq:semienergyD}
\ddt E_{\bkap}(\vec X^h(t), \varkappa^h(t))=-2\pi\,\ipd{\mV^h,~\mV^h\,|\vec X^h_\rho|}\leq 0.
\end{equation}
\end{thm}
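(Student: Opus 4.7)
The plan is to mimic the weak-level calculation leading to \eqref{eq:weakenergyD}, exploiting the fact that \eqref{eq:semi1}--\eqref{eq:semi2} were engineered to preserve the same cancellation structure as their continuous counterparts. First I would test \eqref{eq:semi1} with $\varphi^h = \mV^h \in V^h$ and \eqref{eq:semi2} with $\chi^h = \varkappa^h-\bkap \in V^h$, both admissible choices, and add the two resulting identities.

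Next I would verify the pairwise cancellations. The stiffness contributions $\mp\ipd{\vec X^h\cdot\vec e_1\,\varkappa^h_\rho,\mV^h_\rho\,|\vec X^h_\rho|^{-1}}$ cancel since $(\varkappa^h-\bkap)_\rho=\varkappa^h_\rho$; the $\kappa^h$-terms $\pm 2\ipd{\vec\nu^h\cdot\vec e_1\,\kappa^h\,\mV^h\,(\varkappa^h-\bkap),|\vec X^h_\rho|}$ cancel term by term; and the cubic contributions cancel via $(\varkappa^h+\bkap)(\varkappa^h-\bkap)=(\varkappa^h)^2-\bkap^2$. Moreover, the BGN convection term in \eqref{eq:semi2} vanishes identically for this test choice, because $\varkappa^h_\rho\,\chi^h - (\varkappa^h-\bkap)\,\chi^h_\rho \equiv 0$ when $\chi^h=\varkappa^h-\bkap$. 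What survives is
\begin{equation*}
\ipd{\vec X^h\cdot\vec e_1\,\mV^h,\mV^h\,|\vec X^h_\rho|} + \ipd{\vec X^h\cdot\vec e_1\,\varkappa^h_t,(\varkappa^h-\bkap)\,|\vec X^h_\rho|} + \tfrac{1}{2}\ipd{\vec X^h_t\cdot\vec e_1\,|\vec X^h_\rho| + \vec X^h\cdot\vec e_1\,([\vec X^h_t]_\rho\cdot\vec\tau^h),(\varkappa^h-\bkap)^2} = 0.
\end{equation*}

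The final step is to recognise the last two terms as a full time derivative. Using $\ddt|\vec X^h_\rho|=[\vec X^h_t]_\rho\cdot\vec\tau^h$ together with the product rule applied to the weight $\vec X^h\cdot\vec e_1\,|\vec X^h_\rho|$, these two terms combine into $\tfrac{1}{2}\ddt\ipd{\vec X^h\cdot\vec e_1\,|\vec X^h_\rho|,(\varkappa^h-\bkap)^2}$, which by \eqref{eq:energy} equals $\pi^{-1}\,\ddt E_{\bkap}(\vec X^h,\varkappa^h)$. Multiplying the identity by $2\pi$ then yields the claim, with non-positivity immediate from $\vec X^h\cdot\vec e_1\geq 0$ in the interior.

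I do not foresee a serious obstacle: the mass-lumped product $(\cdot,\cdot)^h$ never enters \eqref{eq:semi1}--\eqref{eq:semi2}, so the quadrature introduced in \eqref{eq:semi3}--\eqref{eq:semi4} to generate the BGN tangential motion and to guarantee well-posedness plays no role in this identity. The only minor point to keep in mind is that cubic products of $V^h$-functions need not lie in $V^h$, but this is irrelevant since they appear only inside the consistent (unlumped) $L^2$-product, where no projection is required.
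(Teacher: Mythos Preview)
Your proposal is correct and follows exactly the same approach as the paper: test \eqref{eq:semi1} with $\varphi^h=\mV^h$, test \eqref{eq:semi2} with $\chi^h=\varkappa^h-\bkap$, add, and recognise the surviving terms as the time derivative of the discrete energy. The paper's proof simply records the outcome of these substitutions without spelling out the individual cancellations you listed, but the argument is identical.
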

\begin{proof}
Setting $\varphi^h = \mV^h(t)$ in \eqref{eq:semi1} and $\chi^h = \varkappa^h(t) - \bkap$ in \eqref{eq:semi2} yields 
\begin{align}
0&=\ipd{\vec X^h\cdot\vec e_1\,\varkappa^h_t,~(\varkappa^h-\bkap)\,|\vec X^h_\rho|} + \frac{1}{2}\ipd{\vec X^h_t\cdot\vec e_1\,|\vec X^h_\rho| + \vec X^h\cdot\vec e_1\,([\vec X^h_t]_\rho\cdot\vec\tau^h),~(\varkappa^h-\bkap)^2}\nn\\
&\qquad\qquad + \ipd{\vec X^h\cdot\vec e_1\,\mathscr{V}^h,~\mathscr{V}^h\,|\vec X^h_\rho|}\nn\\
&=\dfrac{1}{2}\ddt\ipd{\vec X^h\cdot\vec e_1\,|\vec X^h_\rho|,~(\varkappa^h-\bkap)^2} + \ipd{\vec X^h\cdot\vec e_1\,(\mathscr{V}^h)^2,~|\vec X^h_\rho|},\nn
\end{align}
which implies \eqref{eq:semienergyD} straightforwardly. 
\end{proof}

Denote by
\[\vec a^h_{j-\frac{1}{2}} = \vec X^h(\rho_j, t) -\vec X^h(\rho_{j-1}, t),\qquad j = 1,\ldots, J,\]
the segments of the polygonal curve $\Gamma^h(t)$.
Then we have the following theorem which shows that the semidiscrete solution satisfies an equidistribution property. 

\begin{thm}[equidistribution] \label{thm:equid} Let $(\mathscr{V}^h(t), \varkappa^h(t), \vec X^h(t), \kappa^h(t))$ be a solution to the semidiscrete scheme \eqref{eqn:semi}. For a fixed time $t\in(0,T]$, it holds for $j=1,\ldots, J$ that
\begin{equation}\label{eq:equid}
|\vec a^h_{j-\frac{1}{2}}| = |\vec a^h_{j+\frac{1}{2}}|\quad\mbox{if}\quad \vec a_{j-\frac{1}{2}} \nparallel \vec a_{j+\frac{1}{2}}.
\end{equation}
\end{thm}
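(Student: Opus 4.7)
The plan is to read the equidistribution directly off equation \eqref{eq:semi4} alone, exploiting the fact that the mass-lumped inner product decouples the contributions at neighbouring nodes. For each interior node $\rho_j$, I would test \eqref{eq:semi4} with $\vec\eta^h = \vec w\,\phi_j \in \pV^h$, where $\phi_j\in V^h$ is the standard hat function at $\rho_j$ and $\vec w\in\bR^2$ is arbitrary. Since $\phi_j$ is supported on $\bI_j\cup\bI_{j+1}$, and since the lumping rule \eqref{eq:masslumped} picks up only the one-sided values at $\rho_j$, the first term of \eqref{eq:semi4} collapses to $\tfrac12\kappa^h(\rho_j)\bigl[|\vec a^h_{j-\frac{1}{2}}|\,\vec\nu^h|_{\bI_j} + |\vec a^h_{j+\frac{1}{2}}|\,\vec\nu^h|_{\bI_{j+1}}\bigr]\cdot\vec w$, while a short direct computation shows that the stiffness term equals $(\vec\tau^h|_{\bI_j} - \vec\tau^h|_{\bI_{j+1}})\cdot\vec w$.

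Abbreviating $\vec\tau_k := \vec\tau^h|_{\bI_k}$ and $\vec\nu_k := \vec\nu^h|_{\bI_k}$, the arbitrariness of $\vec w$ promotes the resulting scalar identity to the vector equation
\[
\tfrac12\kappa^h(\rho_j)\bigl[|\vec a^h_{j-\frac{1}{2}}|\,\vec\nu_j + |\vec a^h_{j+\frac{1}{2}}|\,\vec\nu_{j+1}\bigr] = \vec\tau_{j+1} - \vec\tau_j.
\]
I would then take the inner product of this identity with $\vec\tau_j$ and with $\vec\tau_{j+1}$ separately, using $\vec\nu_k\cdot\vec\tau_k = 0$ together with the easily verified antisymmetry $\vec\nu_j\cdot\vec\tau_{j+1} = -\vec\nu_{j+1}\cdot\vec\tau_j$, which follows from $\vec\nu_k = -\vec\tau_k^\perp$ and the general identity $\vec a^\perp\cdot\vec b = -\vec a\cdot\vec b^\perp$. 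This produces two scalar relations linking $\kappa^h(\rho_j)$, the two segment lengths $|\vec a^h_{j\pm\frac{1}{2}}|$, and the geometric quantities $\vec\tau_j\cdot\vec\tau_{j+1}$ and $\vec\nu_j\cdot\vec\tau_{j+1}$.

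Adding the two scalar relations cancels the right-hand sides, which are proportional to $1-\vec\tau_j\cdot\vec\tau_{j+1}$ with opposite signs, and leaves
\[
\tfrac12\kappa^h(\rho_j)\bigl(|\vec a^h_{j-\frac{1}{2}}| - |\vec a^h_{j+\frac{1}{2}}|\bigr)\,\vec\nu_j\cdot\vec\tau_{j+1} = 0,
\]
whereas subtracting them produces a relation whose right-hand side is $2(1-\vec\tau_j\cdot\vec\tau_{j+1})$, which is strictly positive whenever $\vec\tau_j$ and $\vec\tau_{j+1}$ are not parallel. Under the hypothesis $\vec a^h_{j-\frac{1}{2}}\nparallel\vec a^h_{j+\frac{1}{2}}$ we have $\vec\tau_j\cdot\vec\tau_{j+1}<1$ and $\vec\nu_j\cdot\vec\tau_{j+1}\neq 0$, so the subtracted relation forces $\kappa^h(\rho_j)\neq 0$, and the added relation then immediately delivers the desired $|\vec a^h_{j-\frac{1}{2}}| = |\vec a^h_{j+\frac{1}{2}}|$.

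The only real obstacle I anticipate is mundane bookkeeping: carrying the one-sided evaluations in \eqref{eq:masslumped} through correctly and keeping the signs in the $\perp$-manipulations straight; after that the argument is pure two-by-two linear algebra. A short remark is in order for boundary nodes in the open-curve setting, where admissible test vectors $\vec w$ are constrained to be parallel to $\vec e_2$ and the vector identity is available only after projection onto $\vec e_2$; in the periodic case all nodes are interior (with indices read modulo $J$), so the equidistribution property holds as stated for every $j=1,\ldots,J$.
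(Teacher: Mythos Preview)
Your argument is correct and is exactly the standard BGN equidistribution computation that the paper invokes by citing \cite[Remark~2.4]{BGN07}; the paper itself gives no details beyond that reference. Your derivation of the nodal vector identity from \eqref{eq:semi4} via the hat-function test, followed by the pairing with $\vec\tau_j$ and $\vec\tau_{j+1}$ and the add/subtract step, is precisely how that result is obtained, so nothing is missing.
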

\begin{proof}
The desired equidistribution property in \eqref{eq:equid} results from \eqref{eq:semi4}, and the rigorous proof can be found in \cite[Remark 2.4]{BGN07}.
\end{proof}

\section{Fully discrete finite element approximations}
\label{sec:fulld}

We further divide the time interval uniformly by $[0,T]=\bigcup_{m=1}^{M}[t_{m-1}, t_{m}]$, where $t_m= m\ttau$ and  $\ttau = T/M$ is the time step size.  Let $\vec X^m\in \pV^h$ be an approximation to $\vec x(t_m)$ for $0\leq m\leq M$. We define the polygonal curve $\Gamma^m = \vec X^m(\bar{\mathbb{I}})$. Throughout this section we always assume that 
\begin{equation}\label{eq:assump}
 \vec X^m\cdot\vec e_1>0\quad\mbox{in}\quad\bar{\bI}\setminus\partial\bI;\qquad |\vec X_\rho^m|>0\quad\mbox{in}\quad\bar{\bI};\quad  m = 0,1,\ldots, M.
\end{equation}
We also set $\partial_s = |\vec X_\rho^m|^{-1} \partial_\rho$ and
\begin{equation}\label{eq:dtauv}
\vec\tau^m =\vec X_\rho^m\,|\vec X_\rho^m|^{-1},\qquad \vec\nu^m = -(\vec\tau^m)^\perp,
\end{equation}
as the discrete unit tangent and normal vectors of $\Gamma^m$. 

For $m=1,\ldots, M$, we introduce the vertex velocity $\mathscr{\vv V}^m\in \pV^h$ with 
\begin{equation}\label{eq:dvelocity}
\mathscr{\vv V}^m(\rho_j) = \frac{\vec X^m(\rho_j)-\vec X^{m-1}(\rho_j)}{\ttau},\,\qquad j=0,\ldots, J.
\end{equation}

We further introduce $\mathcal{J}^m$ as 
\begin{equation}\label{eq:Jm}
\mathcal{J}^m = \frac{\vec X^{m-1}\cdot\vec e_1\,|\vec X_\rho^{m-1}|}{\vec X^m\cdot\vec e_1\,|\vec X_\rho^m|}.
\end{equation}
Then we have the following lemma for $\mathcal{J}^m$.
\begin{lem}
Assume 
\begin{enumerate}[label=$(\mathbf{A \arabic*})$, ref=$\mathbf{A \arabic*}$] 
\item \label{assumpAI} $\max_{1\leq m\leq M}|\mathscr{\vv V}^m_s|\leq C_1$;
\item \label{assumpAII} $\max_{1\leq m\leq M} \left|\frac{\mathscr{\vv V}^m\cdot\vec e_1}{\vec X^m\cdot\vec e_1}\right|\leq C_2$;
\end{enumerate}
where $C_1,C_2$ are some constants which are  independent of the time step size $\ttau$. 
Then it holds that
\begin{equation}\label{eq:sJm}
\sqrt{\mathcal{J}^m} = 1-\frac{1}{2}\left(\frac{\mathscr{\vv V}_\rho^m\cdot\vec\tau^m}{|\vec X_\rho^m|} + \frac{\mathscr{\vv V}^m\cdot\vec e_1}{\vec X^m\cdot\vec e_1}\right)\ttau + O(\ttau^2).
\end{equation}
\end{lem}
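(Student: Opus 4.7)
The plan is a direct Taylor expansion of $\mathcal{J}^m$ around $\ttau=0$, using the identity $\vec X^{m-1} = \vec X^m - \ttau\,\mathscr{\vv V}^m$ that follows from \eqref{eq:dvelocity}, and then taking the square root. The two assumptions \eqref{assumpAI}--\eqref{assumpAII} will be needed only at the end to turn pointwise $O(\ttau^2)$ terms into uniform ones.

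First, I would split $\mathcal{J}^m$ into its two factors and expand each separately. For the radial weight,
\[
\frac{\vec X^{m-1}\cdot\vec e_1}{\vec X^m\cdot\vec e_1}
= 1 - \ttau\,\frac{\mathscr{\vv V}^m\cdot\vec e_1}{\vec X^m\cdot\vec e_1},
\]
which is exact; by assumption \eqref{assumpAII} the coefficient of $\ttau$ is uniformly bounded. For the length element, I square to avoid the root first:
\[
|\vec X_\rho^{m-1}|^2
= |\vec X_\rho^m|^2 - 2\ttau\,\vec X_\rho^m\cdot\mathscr{\vv V}_\rho^m + \ttau^2|\mathscr{\vv V}_\rho^m|^2
= |\vec X_\rho^m|^2\Bigl(1 - 2\ttau\,\frac{\mathscr{\vv V}_\rho^m\cdot\vec\tau^m}{|\vec X_\rho^m|} + \ttau^2\,\frac{|\mathscr{\vv V}_\rho^m|^2}{|\vec X_\rho^m|^2}\Bigr),
\]
where I used \eqref{eq:dtauv}. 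Taking the square root and expanding $\sqrt{1+x} = 1 + \tfrac{1}{2}x + O(x^2)$ gives
\[
\frac{|\vec X_\rho^{m-1}|}{|\vec X_\rho^m|}
= 1 - \ttau\,\frac{\mathscr{\vv V}_\rho^m\cdot\vec\tau^m}{|\vec X_\rho^m|} + O(\ttau^2),
\]
where assumption \eqref{assumpAI} ensures $|\mathscr{\vv V}_\rho^m|/|\vec X_\rho^m|=|\mathscr{\vv V}^m_s|$ is uniformly bounded so the remainder is genuinely $O(\ttau^2)$ independently of $m$.

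Multiplying the two expansions yields
\[
\mathcal{J}^m = 1 - \ttau\Bigl(\frac{\mathscr{\vv V}^m\cdot\vec e_1}{\vec X^m\cdot\vec e_1} + \frac{\mathscr{\vv V}_\rho^m\cdot\vec\tau^m}{|\vec X_\rho^m|}\Bigr) + O(\ttau^2),
\]
and one further application of $\sqrt{1+x}=1+\tfrac{1}{2}x+O(x^2)$ delivers \eqref{eq:sJm}. The only mildly subtle step is verifying that the error terms collected along the way are uniform in $m$ (and in $\rho\in\bar\bI$), which is precisely what hypotheses \eqref{assumpAI}--\eqref{assumpAII} guarantee; everything else is bookkeeping.
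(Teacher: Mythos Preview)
Your argument is correct and follows essentially the same approach as the paper: expand $\mathcal{J}^m$ in $\ttau$ via $\vec X^{m-1}=\vec X^m-\ttau\,\mathscr{\vv V}^m$ and then apply a Taylor expansion of the root. The only cosmetic difference is that the paper computes $(\mathcal{J}^m)^2$ in one go and applies a single $(1+x)^{1/4}$ expansion, whereas you factor $\mathcal{J}^m$ into its radial and arclength parts and apply $\sqrt{1+x}$ twice; the two computations are equivalent.
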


\begin{proof}
Direct computations, on recalling \eqref{eq:Jm}, give rise to  
\begin{align}
(\mathcal{J}^m)^2  &= \frac{(\vec X^{m-1}\cdot\vec e_1)^2}{(\vec X^{m}\cdot\vec e_1)^2}\frac{\vec X_\rho^{m-1}\cdot\vec X_\rho^{m-1}\,}{\vec X_\rho^{m}\cdot\vec X_\rho^{m}} \nn\\
&= \frac{(\vec X^m\cdot\vec e_1-\ttau\mathscr{\vv V}^m\cdot\vec e_1)^2}{(\vec X^m\cdot\vec e_1)^2}\frac{(\vec X^m_\rho - \ttau\mathscr{\vv V}_\rho^m)\cdot(\vec X^m_\rho - \ttau\mathscr{\vv V}_\rho^m)}{\vec X^m_\rho\cdot\vec X^m_\rho} \nn\\
& = \left[1 - 2\ttau\frac{\mathscr{\vv V}^m\cdot\vec e_1}{\vec X^m\cdot\vec e_1} + \ttau^2\frac{(\mathscr{\vv V}^m\cdot\vec e_1)^2}{(\vec X^m\cdot\vec e_1)^2} \right]\left[1-2\ttau\mathscr{\vv V}_\rho^m\cdot\vec\tau^m|\vec X^m_\rho|^{-1} +\ttau^2\,|\mathscr{\vv V}_s^m|^2\right]\nn\\
&= 1-2\bigl[\mathscr{\vv V}^m\cdot\vec e_1\,(\vec X^m\cdot\vec e_1)^{-1}+\mathscr{\vv V}_\rho^m\cdot\vec\tau^m|\vec X_\rho^m|^{-1} \bigr]\,\ttau + O(\ttau^2).
\end{align}
Then applying the Taylor expansion with $(1+\delta x)^{\frac{1}{4}} = 1+ \frac{1}{4}\delta x +O(\delta x^2)$ yields the result in \eqref{eq:sJm}. 
\end{proof}

\subsection{A linear stable approximation}

Let $\mathscr{V}^m, \varkappa^{m}$ and $\kappa^m$
be the numerical approximations of $\mathscr{V}(\cdot,t)$, $\varkappa(\cdot,t)$ and $\kappa(\cdot, t)$ at $t=t_m$.  We propose the following discretized scheme for \eqref{eqn:semi}: Given the initial data $\vec X^0\in\pV^h$ and $\varkappa^0,\kappa^0\in V^h$, we set $\vec X^{-1}=\vec X^0$, so that $J^{0} = 1$, and then for $m=0,\ldots, M-1$, we find  $\mathscr{V}^{m+1}\in V^h$, $\varkappa^{m+1}\in V^h$, $\vec X^{m+1}\in\pV^h$ and $\kappa^{m+1}\in V^h$ such that
\begin{subequations}\label{eqn:fd}
\begin{align}\label{eq:fd1}
&\ipd{\vec X^m\cdot\vec e_1\,\mathscr{V}^{m+1},~\varphi^h\,|\vec X^m_\rho|}-\ipd{\vec X^m\cdot\vec e_1\,\varkappa^{m+1}_\rho,~\varphi^h_\rho\,|\vec X^m_\rho|^{-1}}\nn\\
&\qquad +2\ipd{\vec\nu^m\cdot\vec e_1\,\kappa^m\,(\varkappa^{m+1}-\bkap),~\varphi^h\,|\vec X^m_\rho|}\nn\\
&\qquad+\frac{1}{2}\ipd{\vec X^m\cdot\vec e_1\,(\varkappa^m+\bkap)\,\varkappa^m\,(\varkappa^{m+1}-\bkap),~\varphi^h\,|\vec X^m_\rho|}=0\qquad\forall\varphi^h\in V^h,\\[0.5em]
\label{eq:fd2}
&\ipd{\vec X^m\cdot\vec e_1\,\frac{\varkappa^{m+1}-\bkap-(\varkappa^m-\bkap)\sqrt{\mathcal{J}^m}}{\ttau},~\chi^h\,|\vec X^m_\rho|}^\diamond+\ipd{\vec X^m\cdot\vec e_1\,\mathscr{V}^{m+1}_\rho,~\chi^h_\rho\,|\vec X^m_\rho|^{-1}}\nn\\
&\qquad - 2\ipd{\vec\nu^m\cdot\vec e_1\,\kappa^m\,\mV^{m+1},~\chi^h\,|\vec X^m_\rho|} -\frac{1}{2}\ipd{\vec X^m\cdot\vec e_1\,(\varkappa^m+\bkap)\,\varkappa^m\,\mathscr{V}^{m+1},~\chi^h\,|\vec X^m_\rho|}\nn\\
&\qquad-\frac{1}{2}\ipd{\vec X^m\cdot\vec e_1\,\frac{(\vec X^m-\vec X^{m-1})\cdot\vec\tau^m}{\ttau},~\varkappa^{m+1}_\rho\,\chi^h - (\varkappa^{m+1}-\bkap)\,\chi^h_\rho}=0\qquad\forall\chi^h\in V^h,\\[0.5em]
\label{eq:fd3}
&\ipd{\frac{\vec X^{m+1}-\vec X^m}{\ttau}\cdot\vec\nu^m,~\xi^h\,|\vec X^m_\rho|}^h - \ipd{\mV^{m+1},~\xi^h\,|\vec X^m_\rho|}=0\qquad\forall\xi^h\in V^h,\\[0.5em]
&\ipd{\kappa^{m+1}\,\vec\nu^m,~\vec\eta^h\,|\vec X^h_\rho|}^h +\ipd{\vec X^{m+1}_\rho,~\vec\eta^h_\rho\,|\vec X^m_\rho|^{-1}}=0\qquad\forall\vec\eta^h\in\pV^h,
\label{eq:fd4}
\end{align}
\end{subequations}
where $(\cdot,\cdot)^\diamond$ represents an approximation of the inner product $(\cdot,\cdot)$ using a quadrature rule that is exact for polynomials up to degree three and has non-negative weights. Examples are Gauss quadrature rules with two or more points. 

We next show that \eqref{eq:fd2} is a consistent temporal discretization of \eqref{eq:semi2}. Recalling \eqref{eq:sJm} and applying the Taylor expansion for the first term in \eqref{eq:fd2},  we get
\begin{align}
&\ipd{\vec X^{m}\cdot\vec e_1\,\frac{\varkappa^{m+1}-\bkap - (\varkappa^m-\bkap)\sqrt{\mathcal{J}^m}}{\ttau},~\chi^h\,|\vec X^m_\rho|}^\diamond\nn\\
&= \ipd{\vec X^m\cdot\vec e_1\,\frac{\varkappa^{m+1}-\varkappa^m}{\ttau},~\chi^h\,|\vec X^m_\rho|}^\diamond  + \frac{1}{2}\ipd{\mathscr{\vv V}^m\cdot\vec \ek_1\,|\vec X^m_\rho|,\,(\varkappa^m-\bkap)\chi^h}^\diamond\nn\\
&\qquad\qquad + \frac{1}{2}\ipd{\vec X^m\cdot\vec e_1\,(\mathscr{\vv V}_\rho^m\cdot\vec\tau^m)\,,~(\varkappa^m-\bkap)~\chi^h}^\diamond  + O(\ttau),
\end{align}
which is hence a consistent temporal discretization of the first two terms in \eqref{eq:semi2}. A similar technique was employed by the authors in
\cite{GNZ25willmore}, and also in 
\cite{GNZ24ale} in the context of energy stable fully discrete ALE finite element
approximations for two-phase Navier--Stokes flow.
 
We note that \eqref{eqn:fd} leads to a linear system of equations at each time level, and in particular  the whole system can be decoupled. To be precise, we can first solve for $\mV^{m+1}$ and $\varkappa^{m+1}$ from the linear system of \eqref{eq:fd1} and  \eqref{eq:fd2}, and then solve for $\vec X^{m+1}$ and $\kappa^{m+1}$ from the linear system of \eqref{eq:fd3} and \eqref{eq:fd4}. 

We next aim to show that \eqref{eqn:fd} admits a unique solution. To this end, we follow \cite{BGN07} to define the vertex normal $\vec\omega^m\in [V^h]^2$ such that
\[
\bigl(\vec\omega^m,~\vec\xi^h\,|\vec X_\rho^m|\bigr)^h = \bigl(\vec\nu^m,~\vec\xi^h\,|\vec X_\rho^m|\bigr)\qquad\forall\vec\xi^h\in [V^h]^2.
\]
It is not difficult to show that
\begin{equation*}
\vec\omega^m(\vec\rho_j) = \left\{\begin{array}{ll}
\frac{-[\vec a^m_{j+\frac{1}{2}} - \vec a^m_{j - \frac{1}{2}}]^\perp}{|\vec a^m_{j+\frac{1}{2}}| + |\vec a^m_{j-\frac{1}{2}}|}\qquad &\rho_j\in\bar{\bI}\setminus\partial\bI,\\[0.4em]
\vec\nu^m(\rho_j)\qquad &\rho_j\in\partial\bI.
\end{array}\right.
\end{equation*}
where
\begin{equation}
\label{eq:ajm}
\vec a_{j-\frac{1}{2}}^m = \vec X^m(\rho_j) - \vec X^m(\rho_{j-1}),\qquad\rho_j\in\bar{\bI}\setminus\partial\bI.
\end{equation}
Moreover, it holds that
\begin{equation}\label{eq:weightnor}
\bigl(\chi^h,~\vec\omega^m\cdot\vec\xi^h\,|\vec X_\rho^m|\bigr)^h = \bigl(\chi^h,~\vec\nu^m\cdot\vec\xi^h\,|\vec X_\rho^m|\bigr)^h\qquad\forall\chi^h\in V^h,\quad \vec\xi^h\in [V^h]^2.
\end{equation}

\begin{thm}[existence and uniqueness]\label{thm:wellposed} 
Under the assumption \eqref{eq:assump}, there exists a unique solution $\left(\mV^{m+1},~\varkappa^{m+1}\right)$ to the system of \eqref{eq:fd1} and \eqref{eq:fd2}. Moreover, on assuming that
\begin{enumerate}[label=$(\mathbf{B \arabic*})$, ref=$\mathbf{B \arabic*}$] 
\item \label{assumpBI}  $\vec\omega^m(\rho_j)\neq\vec 0$ for all $\rho_j\in\bI$; 
\item \label{assumpBII}  $\vec\omega^m(\rho_j)- (\vec\omega^m(\rho_j)\cdot\vec e_1)\vec e_1\neq \vec 0$ for all $\rho_j\in\partial\bI$;
\item \label{assupBIII} ${\rm dim\; span}
\bigl\{\vec\omega^{m}(\rho_j)\bigr\}_{j=0}^J=2$;
\end{enumerate}
there exists a unique solution $\left(\kappa^{m+1}, \vec X^{m+1}\right)$ to the system of \eqref{eq:fd3} and \eqref{eq:fd4} for a given $\mathscr{V}^{m+1}$. 
\end{thm}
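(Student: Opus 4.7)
Both \eqref{eq:fd1}--\eqref{eq:fd2} and \eqref{eq:fd3}--\eqref{eq:fd4} are square linear systems, so the plan is to reduce existence to uniqueness and prove uniqueness of each block by testing the corresponding homogeneous system with the natural pairings, exactly in the spirit of the stability proof.

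For the first block, let $U=\mV^{m+1,1}-\mV^{m+1,2}$ and $K=\varkappa^{m+1,1}-\varkappa^{m+1,2}$ be the differences of two hypothetical solutions. I would insert $\varphi^h=U$ into the homogeneous version of \eqref{eq:fd1} and $\chi^h=K$ into the homogeneous version of \eqref{eq:fd2} and add. The scheme is engineered so that the off-diagonal couplings between $U$ and $K$ are antisymmetric: the two mixed diffusion contributions $\pm\ipd{\vec X^m\cdot\vec e_1\,K_\rho,\,U_\rho\,|\vec X^m_\rho|^{-1}}$ cancel, as do the two curvature couplings involving $\kappa^m$ and the two cubic couplings involving $(\varkappa^m+\bkap)\,\varkappa^m$. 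The convective contribution reduces to $-\tfrac{1}{2}\ipd{\vec X^m\cdot\vec e_1\,(\vec X^m-\vec X^{m-1})\cdot\vec\tau^m/\ttau,~K_\rho\,K-K\,K_\rho}$, which vanishes pointwise (this is precisely the discrete analogue of the antisymmetric splitting in \eqref{eq:anti1}). What is left reads
\begin{equation*}
\ipd{\vec X^m\cdot\vec e_1\,U^2,~|\vec X^m_\rho|}+\frac{1}{\ttau}\ipd{\vec X^m\cdot\vec e_1\,K^2,~|\vec X^m_\rho|}^\diamond=0.
\end{equation*}
Because the quadrature $(\cdot,\cdot)^\diamond$ is exact on the cubic integrand and both summands are non-negative, the assumption $\vec X^m\cdot\vec e_1>0$ on $\bar{\bI}\setminus\partial\bI$ together with the continuity of $U,K\in V^h$ forces $U\equiv K\equiv 0$.

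For the second block, let $\vec Y$ and $K$ denote the differences of two hypothetical solutions to \eqref{eq:fd3}--\eqref{eq:fd4} for the same $\mV^{m+1}$. I would test the homogeneous \eqref{eq:fd4} with $\vec\eta^h=\vec Y\in\pV^h$ and the homogeneous \eqref{eq:fd3} with $\xi^h=K\in V^h$; using the identity $\ipd{K\vec\nu^m,\vec Y\,|\vec X^m_\rho|}^h=\ipd{\vec Y\cdot\vec\nu^m,K\,|\vec X^m_\rho|}^h$, subtracting the two kills the coupling and leaves $\ipd{\vec Y_\rho,~\vec Y_\rho\,|\vec X^m_\rho|^{-1}}=0$. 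Hence $\vec Y$ is a constant vector on $\bI$; in the open-curve case the essential condition in $\pV^h$ further restricts $\vec Y=c\,\vec e_2$. Feeding $\vec Y_\rho=0$ back into the homogeneous \eqref{eq:fd4} and invoking \eqref{eq:weightnor} to exchange $\vec\nu^m$ for $\vec\omega^m$, mass lumping isolates node by node the conditions $K(\rho_j)\,\vec\omega^m(\rho_j)\cdot\vec\eta^h(\rho_j)=0$ for all admissible $\vec\eta^h$. Assumption \eqref{assumpBI} disposes of interior nodes, while assumption \eqref{assumpBII} disposes of boundary nodes (where only the $\vec e_2$-component of $\vec\eta^h$ is free and (B2) says precisely that $\vec\omega^m\cdot\vec e_2\neq 0$ there), so $K\equiv 0$. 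Finally, testing the homogeneous \eqref{eq:fd3} with each nodal basis function of $V^h$ and applying \eqref{eq:weightnor} once more yields $\vec Y\cdot\vec\omega^m(\rho_j)=0$ for every $j$; since $\vec Y$ is constant, assumption \eqref{assupBIII} forces $\vec Y=\vec 0$.

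The only real obstacle lies in the first block, where one has to verify carefully that the convective term and all three curvature couplings in \eqref{eq:fd1}--\eqref{eq:fd2} are indeed antisymmetric in $(U,K)$; this is exactly the structural property that motivated the weak formulation, but it requires patient bookkeeping. The second block is routine once one appreciates the complementary roles of \eqref{assumpBI}--\eqref{assupBIII}: jointly they ensure that neither a non-zero scalar field nor a non-zero constant vector can be invisible to the weighted vertex normals $\vec\omega^m$, at either interior or boundary nodes.
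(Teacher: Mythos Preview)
Your proposal is correct and follows essentially the same approach as the paper: reduce to the homogeneous systems, test with the natural pairings to exploit the antisymmetric couplings, and use \eqref{eq:assump} for the first block and \ref{assumpBI}--\ref{assupBIII} for the second. The only cosmetic difference is that in the second block the paper first uses \eqref{eq:fd3} together with \ref{assupBIII} to kill the constant vector and then \eqref{eq:fd4} with \ref{assumpBI}, \ref{assumpBII} to kill $K$, whereas you do these two steps in the opposite order; both orders are valid and rely on the same ingredients.
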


\begin{proof} We first prove that the linear system of \eqref{eq:fd1} and \eqref{eq:fd2} admits a unique solution $(\mV^{m+1},~\varkappa^{m+1})$. It suffices to show that the corresponding homogeneous system has only the zero solution.

Thus we first consider $(\mV,\varkappa)\in V^h\times V^h$ such that
\begin{subequations}\label{eqn:homoVk}
\begin{align}
\label{eq:homo1}
&\ipd{\vec X^m\cdot\vec e_1\,\mathscr{V},~\varphi^h\,|\vec X^m_\rho|}-\ipd{\vec X^m\cdot\vec e_1\,\varkappa_\rho,~\varphi^h_\rho\,|\vec X^m_\rho|^{-1}}+2\ipd{\vec\nu^m\cdot\vec e_1\,\kappa^m\,\varkappa,~\varphi^h\,|\vec X^m_\rho|}\nn\\
&\qquad+\frac{1}{2}\ipd{\vec X^m\cdot\vec e_1\,(\varkappa^m+\bkap)\,\varkappa^m\,\varkappa,~\varphi^h\,|\vec X^m_\rho|}=0\qquad\forall\varphi^h\in V^h,\\[0.5em]
\label{eq:homo2}
&\ipd{\vec X^m\cdot\vec e_1\,\frac{\varkappa}{\ttau},~\chi^h\,|\vec X^m_\rho|}^\diamond+\ipd{\vec X^m\cdot\vec e_1\,\mathscr{V}_\rho,~\chi^h_\rho\,|\vec X^m_\rho|^{-1}}\nn\\
&\qquad - 2\ipd{\vec\nu^m\cdot\vec e_1\,\kappa^m\,\mV,~\chi^h\,|\vec X^m_\rho|} -\frac{1}{2}\ipd{\vec X^m\cdot\vec e_1\,(\varkappa^m+\bkap)\,\varkappa^m\,\mathscr{V},~\chi^h\,|\vec X^m_\rho|}\nn\\
&\qquad-\frac{1}{2}\ipd{\vec X^m\cdot\vec e_1\,\frac{(\vec X^m-\vec X^{m-1})\cdot\vec\tau^m}{\ttau},~\varkappa_\rho\,\chi^h - \varkappa\,\chi^h_\rho}=0\qquad\forall\chi^h\in V^h.
\end{align}
\end{subequations}
We then choose $\varphi^h =\ttau \mV$ in \eqref{eq:homo1} and $\chi^h =\ttau\, \varkappa$ in \eqref{eq:homo2} and combine the two equations to get
\begin{equation}
\ttau\,\ipd{\vec X^m\cdot\vec e_1\,\mV,~\mV\,|\vec X^m_\rho|} + \ipd{\vec X^m\cdot\vec e_1\,\varkappa,~\varkappa\,|\vec X_\rho^m|}=0,\nn
\end{equation}
which yields that $\mV = 0$ and $\varkappa=0$ by using \eqref{eq:assump}. Thus \eqref{eq:fd1} and \eqref{eq:fd2} has a unique solution.

In order to prove the well-posedness for the linear system of  \eqref{eq:fd3} and \eqref{eq:fd4}, we also consider its corresponding homogeneous system: find $(\vec X, \kappa)\in\pV^h\times V^h$ such that
\begin{subequations}\label{eqn:homoXk}
\begin{align}
\label{eq:homo3}
&\ipd{\frac{\vec X}{\ttau}\cdot\vec\nu^m,~\xi^h\,|\vec X^m_\rho|}^h =0\qquad\forall\xi^h\in V^h,\\[0.5em]
&\ipd{\kappa\,\vec\nu^m,~\vec\eta^h\,|\vec X^m_\rho|}^h +\ipd{\vec X_\rho,~\vec\eta^h_\rho\,|\vec X^m_\rho|^{-1}}=0\qquad\forall\vec\eta^h\in\pV^h.
\label{eq:homo4}
\end{align}
\end{subequations}
Then setting $\xi^h = \ttau\,\kappa$ in \eqref{eq:homo3} and $\vec\eta^h = \vec X$ in \eqref{eq:homo4} and combining the two equations, we obtain
\begin{equation}
\ipd{\vec X_\rho,~\vec X_\rho\,|\vec X_\rho^m|^{-1}}=0,\nn
\end{equation}
which implies that $\vec X=\vec X^c$ is a constant. Then recalling \eqref{eq:masslumped} and \eqref{eq:weightnor}, we can recast \eqref{eq:homo3} as
\[0 = \ipd{\vec X^c\cdot\vec\nu^m,~\xi^h\,|\vec X_\rho^m|}^h = \ipd{\vec X_c^m\cdot\vec\omega^m,~\xi^h\,|\vec X_\rho^m|}\qquad\forall\xi^h\in V^h.\] 
Thus we have $\vec X^c=0$ in view of the assumption \ref{assupBIII}. Inserting $\vec X^c$ in \eqref{eq:homo4} and recalling \eqref{eq:masslumped} and \eqref{eq:weightnor} gives rise to 
\[0=\ipd{\kappa\,\vec\nu^m,~\vec\eta^h\,|\vec X_\rho^m|}^h = \ipd{\kappa\,\vec\omega^m,~\vec\eta^h\,|\vec X_\rho^m|}^h\qquad\forall\vec\eta^h\in\pV^h.\]
We introduce $\vec\omega_{_\partial}\in \pV^h$ via
\begin{equation}
\vec\omega^m_{_\partial}(\rho_j) = \left\{\begin{array}{ll}
 \vec\omega^m(\rho_j)\qquad&\rho_j\in\bI,\\[0.5em]
\vec\omega^m(\rho_j) - (\vec\omega^m(\rho_j)\cdot\vec e_1)\vec e_1\quad &\rho_j\in\partial\bI,
 \end{array}\right.\nn
\end{equation}
and then choose $\vec\eta^h\in\pV^h$ such that $\vec\eta(\rho_j) = \kappa(\rho_j)\,\vec\omega_\partial(\rho_j)$, for $j=1,\ldots, J$, to obtain that 
\[0=\ipd{\kappa^2\,|\vec\omega_{_\partial}|^2,\,|\vec X_\rho^m|}^h,\]
which  implies that $\kappa = 0$ on recalling the assumptions in \ref{assumpBI} and \ref{assumpBII}. Therefore we have $\vec X=\vec 0$ and $\kappa=0$ for the homogeneous system \eqref{eqn:homoXk}. Thus \eqref{eq:fd3} and \eqref{eq:fd4} admits a unique solution $(\vec X^{m+1}, \varkappa^{m+1})$. 
\end{proof}

\begin{thm}[unconditional stability] Let $\left(\mathscr{V}^{m+1},\varkappa^{m+1},\kappa^{m+1}, \vec X^{m+1}\right)$  be a solution to the linear scheme \eqref{eqn:fd}. Then it holds for $m=0,\cdots, M-1$ 
\begin{equation}\label{eq:stab}
E_{\bkap}(\vec X^m,\varkappa^{m+1}) + 2\pi\,\ttau\,\ipd{\vec X^m\cdot\vec e_1\,(\mathscr{V}^{m+1})^2,\,|\vec X^m_\rho|} \leq  E_{\bkap}(\vec X^{m-1},\varkappa^m).
\end{equation}
\end{thm}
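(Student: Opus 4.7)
The plan is to reproduce at the fully discrete level the cancellation structure that produced the semidiscrete identity \eqref{eq:semienergyD}. I test \eqref{eq:fd1} with $\varphi^h = \mathscr{V}^{m+1}$ and \eqref{eq:fd2} with $\chi^h = \varkappa^{m+1}-\bkap$, and add the two resulting equations. The scheme has been engineered so that the following pairs of terms cancel exactly: the two contributions involving $\vec X^m\cdot\vec e_1\,\varkappa^{m+1}_\rho\,\mathscr{V}^{m+1}_\rho\,|\vec X^m_\rho|^{-1}$ (using $(\varkappa^{m+1}-\bkap)_\rho = \varkappa^{m+1}_\rho$), the two terms weighted by $\vec\nu^m\cdot\vec e_1\,\kappa^m$, and the two terms weighted by $(\varkappa^m+\bkap)\,\varkappa^m$. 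Moreover, the tangential convective term in \eqref{eq:fd2} vanishes for this choice of $\chi^h$ because the bracket $\varkappa^{m+1}_\rho\,(\varkappa^{m+1}-\bkap) - (\varkappa^{m+1}-\bkap)\,\varkappa^{m+1}_\rho$ is identically zero. What remains after cancellation is the identity
\[
0 = \ipd{\vec X^m\cdot\vec e_1\,(\mathscr{V}^{m+1})^2,~|\vec X^m_\rho|} + \ipd{\vec X^m\cdot\vec e_1\,\frac{(\varkappa^{m+1}-\bkap)-(\varkappa^m-\bkap)\sqrt{\mathcal{J}^m}}{\ttau},~(\varkappa^{m+1}-\bkap)\,|\vec X^m_\rho|}^\diamond.
\]

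The crux is then to turn the second summand into a telescoping discrete energy difference. The key algebraic observation, coming directly from \eqref{eq:Jm}, is
\[
\vec X^m\cdot\vec e_1\,|\vec X^m_\rho|\,\sqrt{\mathcal{J}^m} = \sqrt{\vec X^m\cdot\vec e_1\,|\vec X^m_\rho|}\,\sqrt{\vec X^{m-1}\cdot\vec e_1\,|\vec X^{m-1}_\rho|},
\]
which is well-defined and nonnegative under \eqref{eq:assump}. Applying the pointwise-in-$\rho$ Young inequality $|ab|\le \tfrac{1}{2}(a^2+b^2)$ with $a = \sqrt{\vec X^m\cdot\vec e_1\,|\vec X^m_\rho|}\,(\varkappa^{m+1}-\bkap)$ and $b = \sqrt{\vec X^{m-1}\cdot\vec e_1\,|\vec X^{m-1}_\rho|}\,(\varkappa^m-\bkap)$ gives the telescoping lower bound
\[
\vec X^m\cdot\vec e_1\,|\vec X^m_\rho|\,(\varkappa^{m+1}-\bkap)^2 - \sqrt{\mathcal{J}^m}\,\vec X^m\cdot\vec e_1\,|\vec X^m_\rho|\,(\varkappa^m-\bkap)\,(\varkappa^{m+1}-\bkap) \ge \tfrac{1}{2}\bigl[\vec X^m\cdot\vec e_1\,|\vec X^m_\rho|\,(\varkappa^{m+1}-\bkap)^2 - \vec X^{m-1}\cdot\vec e_1\,|\vec X^{m-1}_\rho|\,(\varkappa^m-\bkap)^2\bigr].
\]
Since $(\cdot,\cdot)^\diamond$ has nonnegative weights, this pointwise inequality survives the quadrature.

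It remains to identify the two surviving quadrature expressions with the exact energies. On each subinterval $\bI_j$ the factor $\vec X^\ell\cdot\vec e_1$ is affine, $|\vec X^\ell_\rho|$ is constant, and $(\varkappa^h-\bkap)^2$ is quadratic; hence the integrand $\vec X^\ell\cdot\vec e_1\,|\vec X^\ell_\rho|\,(\varkappa^h-\bkap)^2$ is a polynomial of degree three and is integrated \emph{exactly} by $(\cdot,\cdot)^\diamond$. Consequently the two quadrature terms produced by the Young step coincide with $\pi^{-1}E_{\bkap}(\vec X^m,\varkappa^{m+1})$ and $\pi^{-1}E_{\bkap}(\vec X^{m-1},\varkappa^m)$. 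Substituting back and multiplying through by $2\pi\,\ttau$ yields \eqref{eq:stab}.

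The main obstacle, and the reason for the bespoke form of the discrete time derivative $[(\varkappa^{m+1}-\bkap)-(\varkappa^m-\bkap)\sqrt{\mathcal{J}^m}]/\ttau$ in \eqref{eq:fd2}, is precisely that the factor $\sqrt{\mathcal{J}^m}$ is what produces the geometric-mean weight $\sqrt{\vec X^m\cdot\vec e_1\,|\vec X^m_\rho|}\cdot\sqrt{\vec X^{m-1}\cdot\vec e_1\,|\vec X^{m-1}_\rho|}$ in the cross term after testing by $\varkappa^{m+1}-\bkap$. Without this weighting Young's inequality would not yield the clean telescoping between the two polygons $\Gamma^{m-1}$ and $\Gamma^m$. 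The cubic exactness of $(\cdot,\cdot)^\diamond$ is the complementary ingredient: without it, the right-hand side of the Young step would only be a quadrature approximation of $E_{\bkap}$ rather than $E_{\bkap}$ itself.
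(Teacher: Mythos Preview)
Your proof is correct and follows essentially the same route as the paper: test \eqref{eq:fd1} with $\mathscr{V}^{m+1}$ and \eqref{eq:fd2} with $\varkappa^{m+1}-\bkap$, observe that all cross terms cancel, and then convert the surviving $(\cdot,\cdot)^\diamond$ term into a telescoping energy difference via an elementary quadratic inequality together with the cubic exactness of the quadrature. The only cosmetic difference is that the paper applies the inequality $(a-b)a\ge\tfrac12(a^2-b^2)$ with $a=\varkappa^{m+1}-\bkap$, $b=(\varkappa^m-\bkap)\sqrt{\mathcal{J}^m}$ and then uses $\vec X^m\cdot\vec e_1\,|\vec X^m_\rho|\,\mathcal{J}^m=\vec X^{m-1}\cdot\vec e_1\,|\vec X^{m-1}_\rho|$, whereas you absorb the weights into $a$ and $b$ and phrase the same inequality as Young's; these are identical manipulations.
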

\begin{proof}
We choose $\varphi^h = \ttau\,\mV^{m+1}$ in \eqref{eq:fd1} and $\chi^h = \ttau\varkappa^{m+1}$ in \eqref{eq:fd2}, and then combine these two equations to obtain
\begin{align}
0&=\ttau\,\ipd{\vec X^m\cdot\vec e_1\,\mV^{m+1},~\mV^{m+1}|\vec X_\rho^m|}\nn\\
&\qquad +\ipd{\vec X^m\cdot\vec e_1\,\left[\varkappa^{m+1}-\bkap - (\varkappa^m-\bkap)\sqrt{\mathcal{J}^m}\right],~(\varkappa^{m+1}-\bkap)|\vec X_\rho^m|}^\diamond.\label{eq:stab1}
\end{align}

Using the inequality $(a-b)a\geq \frac{1}{2}(a^2-b^2)$, the fact that the quadrature rules have non-negative weights and are exact for polynomials of degree three,  and recalling \eqref{eq:Jm}, we have
\begin{align}
&\ipd{\vec X^m\cdot\vec e_1\,\left[\varkappa^{m+1}-\bkap - (\varkappa^m-\bkap)\sqrt{\mathcal{J}^m}\right],~(\varkappa^{m+1}-\bkap)|\vec X_\rho^m|}^\diamond\nn\\
&\geq\frac{1}{2}\ipd{\vec X^m\cdot\vec e_1\,|\vec X_\rho^m|,~(\varkappa^{m+1}-\bkap)^2}^\diamond - \frac{1}{2}\ipd{\vec X^m\cdot\vec e_1\,|\vec X_\rho^m|,~(\varkappa^{m}-\bkap)^2\mathcal{J}^m}^\diamond\nn\\
&=\frac{1}{2}\ipd{\vec X^m\cdot\vec e_1\,|\vec X_\rho^m|,~(\varkappa^{m+1}-\bkap)^2} - \frac{1}{2}\ipd{\vec X^{m-1}\cdot\vec e_1\,|\vec X_\rho^{m-1}|,~(\varkappa^{m}-\bkap)^2}\nn\\
& = \frac{1}{2\pi} \left(E_{\bkap}(\vec X^m,\varkappa^{m+1})- E_{\bkap}(\vec X^{m-1}, \varkappa^m)\right).\label{eq:stab2}
\end{align}
Inserting \eqref{eq:stab2} into \eqref{eq:stab1} then leads to \eqref{eq:stab}. 
\end{proof}

\subsection{A nonlinear stable approximation}
We next propose a nonlinear discretization of \eqref{eqn:semi} which will also lead to an unconditional stability estimate.  Given the initial data $\vec X^0\in \pV^h$ and $\varkappa^0,\kappa^0\in V^h$, for $m=0,\ldots, M-1$, we find  $\mathscr{V}^{m+1}\in V^h$, $\varkappa^{m+1}\in V^h$, $\vec X^{m+1}\in\pV^h$ and $\kappa^{m+1}\in V^h$ such that
\begin{subequations}\label{eqn:nfd}
\begin{align}\label{eq:nfd1}
&\ipd{\vec X^m\cdot\vec e_1\,\mathscr{V}^{m+1},~\varphi^h\,|\vec X^m_\rho|}-\ipd{\vec X^m\cdot\vec e_1\,\varkappa^{m+1}_\rho,~\varphi^h_\rho\,|\vec X^m_\rho|^{-1}}\nn\\
&\qquad +2\ipd{\vec\nu^m\cdot\vec e_1\,\kappa^m\,(\varkappa^{m+1}-\bkap),~\varphi^h\,|\vec X^m_\rho|}\nn\\
&\qquad+\frac{1}{2}\ipd{\vec X^m\cdot\vec e_1\,(\varkappa^m+\bkap)\,\varkappa^m\,(\varkappa^{m+1}-\bkap),~\varphi^h\,|\vec X^m_\rho|}=0\qquad\forall\varphi^h\in V^h,\\[0.5em]
\label{eq:nfd2}
&\ipd{\vec X^m\cdot\vec e_1\,\frac{\varkappa^{m+1} - \varkappa^m}{\ttau},~\chi^h\,|\vec X^m_\rho|} +\frac{1}{2}\ipd{\frac{\vec X^{m+1}-\vec X^m}{\ttau}\cdot\vec e_1\,|\vec X_\rho^{m+1}|,~(\varkappa^{m+1}-\bkap)\,\chi^h}\nn\\
&\qquad +\frac{1}{2}\ipd{\vec X^m\cdot\vec e_1\,\frac{(\vec X^{m+1}-\vec X^m)_\rho\cdot\vec X_\rho^{m+1}}{\ttau\,|\vec X_\rho^m|},~(\varkappa^{m+1}-\bkap)\,\chi^h}+ \ipd{\vec X^m\cdot\vec e_1\,\mathscr{V}^{m+1}_\rho,~\chi^h_\rho\,|\vec X^m_\rho|^{-1}} \nn\\
&\qquad - 2\ipd{\vec\nu^m\cdot\vec e_1\,\kappa^m\,\mV^{m+1},~\chi^h\,|\vec X^m_\rho|} -\frac{1}{2}\ipd{\vec X^m\cdot\vec e_1\,(\varkappa^m+\bkap)\,\varkappa^m\,\mathscr{V}^{m+1},~\chi^h\,|\vec X^m_\rho|}\nn\\
&\qquad-\frac{1}{2}\ipd{\vec X^m\cdot\vec e_1\,\frac{(\vec X^{m+1}-\vec X^{m})\cdot\vec\tau^m}{\ttau},~\varkappa^{m+1}_\rho\,\chi^h - (\varkappa^{m+1}-\bkap)\,\chi^h_\rho}=0\quad\forall\chi^h\in V^h,\\[0.5em]
\label{eq:nfd3}
&\ipd{\frac{\vec X^{m+1}-\vec X^m}{\ttau}\cdot\vec\nu^m,~\xi^h\,|\vec X^m_\rho|}^h - \ipd{\mV^{m+1},~\xi^h\,|\vec X^m_\rho|}=0\qquad\forall\xi^h\in V^h,\\[0.5em]
&\ipd{\kappa^{m+1}\,\vec\nu^m,~\vec\eta^h\,|\vec X^h_\rho|}^h +\ipd{\vec X^{m+1}_\rho,~\vec\eta^h_\rho\,|\vec X^m_\rho|^{-1}}=0\qquad\forall\vec\eta^h\in\pV^h. 
\label{eq:nfd4}
\end{align}
\end{subequations}
We note here that the introduced method \eqref{eqn:nfd} leads to a nonlinear system due to the nonlinear terms in \eqref{eq:nfd2}. Nevertheless, we have the following theorem which mimics the energy dissipation law \eqref{eq:weakenergyD} on the fully discrete level. 
\begin{thm}[unconditional stability] Let $\left(\mathscr{V}^{m+1},\varkappa^{m+1},\kappa^{m+1}, \vec X^{m+1}\right)$  be a solution to the nonlinear scheme \eqref{eqn:fd}. Then  it holds for $m=0,\cdots, M-1$
\begin{equation}\label{eq:nstab}
E_{\bkap}(\vec X^{m+1},\varkappa^{m+1}) + 2\pi\,\ttau\,\ipd{\vec X^m\cdot\vec e_1\,(\mathscr{V}^{m+1})^2,\,|\vec X^m_\rho|} \leq  E_{\bkap}(\vec X^m,\varkappa^m).
\end{equation}
\end{thm}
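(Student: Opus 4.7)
The plan is to mirror the stability proofs for the semidiscrete scheme and for the linear fully discrete scheme. I test \eqref{eq:nfd1} with $\varphi^h = \ttau\,\mV^{m+1}$ and \eqref{eq:nfd2} with $\chi^h = \ttau\,(\varkappa^{m+1}-\bkap)$, and add the two resulting identities. The scheme \eqref{eqn:nfd} has been designed with an antisymmetric coupling between \eqref{eq:nfd1} and \eqref{eq:nfd2}, so that the cross-gradient contribution involving $\varkappa^{m+1}_\rho\,\mV^{m+1}_\rho$, the Gauss curvature contribution involving $\vec\nu^m\cdot\vec e_1\,\kappa^m$, and the cubic contribution involving $(\varkappa^m+\bkap)\,\varkappa^m$ all cancel pairwise. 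In addition, the BGN-type convective term in the last line of \eqref{eq:nfd2} vanishes identically since $\varkappa^{m+1}_\rho\,\chi^h - (\varkappa^{m+1}-\bkap)\,\chi^h_\rho = 0$ for this particular $\chi^h$.

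After these cancellations, only the following four contributions remain:
\begin{align*}
0 &= \ttau\,\ipd{\vec X^m\cdot\vec e_1\,(\mV^{m+1})^2,\,|\vec X^m_\rho|} + \ipd{\vec X^m\cdot\vec e_1\,(\varkappa^{m+1}-\varkappa^m),\,(\varkappa^{m+1}-\bkap)\,|\vec X^m_\rho|} \\
&\quad + \tfrac{1}{2}\ipd{(\vec X^{m+1}-\vec X^m)\cdot\vec e_1\,|\vec X^{m+1}_\rho|,\,(\varkappa^{m+1}-\bkap)^2} \\
&\quad + \tfrac{1}{2}\ipd{\vec X^m\cdot\vec e_1\,\tfrac{(\vec X^{m+1}-\vec X^m)_\rho\cdot\vec X^{m+1}_\rho}{|\vec X^m_\rho|},\,(\varkappa^{m+1}-\bkap)^2}.
\end{align*}
To the second term I apply the elementary inequality $(a-b)\,a\geq\tfrac{1}{2}(a^2-b^2)$ with $a=\varkappa^{m+1}-\bkap$ and $b=\varkappa^m-\bkap$, which yields the lower bound $\tfrac{1}{2}(\vec X^m\cdot\vec e_1\,|\vec X^m_\rho|,\,(\varkappa^{m+1}-\bkap)^2 - (\varkappa^m-\bkap)^2)$, exactly as in the proof for the linear scheme.

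The main obstacle is to show that the third and fourth terms together bound $\tfrac{1}{2}(\vec X^{m+1}\cdot\vec e_1\,|\vec X^{m+1}_\rho| - \vec X^m\cdot\vec e_1\,|\vec X^m_\rho|,\,(\varkappa^{m+1}-\bkap)^2)$ from below, and this is precisely where the specific nonlinear form of the geometric terms in \eqref{eq:nfd2} pays off. Splitting $\vec X^{m+1}\cdot\vec e_1\,|\vec X^{m+1}_\rho| - \vec X^m\cdot\vec e_1\,|\vec X^m_\rho| = (\vec X^{m+1}-\vec X^m)\cdot\vec e_1\,|\vec X^{m+1}_\rho| + \vec X^m\cdot\vec e_1\,(|\vec X^{m+1}_\rho| - |\vec X^m_\rho|)$ and using the nonnegativity of $\vec X^m\cdot\vec e_1$ from \eqref{eq:assump}, the claim reduces to the pointwise inequality $(\vec X^{m+1}-\vec X^m)_\rho\cdot\vec X^{m+1}_\rho \geq |\vec X^m_\rho|\,(|\vec X^{m+1}_\rho|-|\vec X^m_\rho|)$. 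Rewriting the left-hand side as $|\vec X^{m+1}_\rho|^2 - \vec X^m_\rho\cdot\vec X^{m+1}_\rho$, Cauchy--Schwarz gives the intermediate bound $|\vec X^{m+1}_\rho|(|\vec X^{m+1}_\rho|-|\vec X^m_\rho|)$, and a short case analysis on the sign of $|\vec X^{m+1}_\rho|-|\vec X^m_\rho|$ confirms that this is in turn at least $|\vec X^m_\rho|(|\vec X^{m+1}_\rho|-|\vec X^m_\rho|)$.

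Combining the two lower bounds, the $\vec X^m\cdot\vec e_1\,|\vec X^m_\rho|\,(\varkappa^{m+1}-\bkap)^2$ contributions cancel and we are left with $\tfrac{1}{2}(\vec X^{m+1}\cdot\vec e_1\,|\vec X^{m+1}_\rho|,\,(\varkappa^{m+1}-\bkap)^2) - \tfrac{1}{2}(\vec X^m\cdot\vec e_1\,|\vec X^m_\rho|,\,(\varkappa^m-\bkap)^2)$. Recalling the definition \eqref{eq:energy} of $E_{\bkap}$, this equals $\tfrac{1}{2\pi}\bigl(E_{\bkap}(\vec X^{m+1},\varkappa^{m+1}) - E_{\bkap}(\vec X^m,\varkappa^m)\bigr)$, so the four-term identity gives $0 \geq \ttau\,(\vec X^m\cdot\vec e_1\,(\mV^{m+1})^2,\,|\vec X^m_\rho|) + \tfrac{1}{2\pi}\bigl(E_{\bkap}(\vec X^{m+1},\varkappa^{m+1}) - E_{\bkap}(\vec X^m,\varkappa^m)\bigr)$, and multiplication by $2\pi$ yields \eqref{eq:nstab}.
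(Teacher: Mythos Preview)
Your proof is correct and follows essentially the same route as the paper: the same choice of test functions, the same four-term identity after cancellation, and the same treatment of the second term via $(a-b)a\geq\tfrac12(a^2-b^2)$. The only difference is in how you bound the fourth term: the paper first uses the vector inequality $(\vec a-\vec b)\cdot\vec a\geq\tfrac12(|\vec a|^2-|\vec b|^2)$ and then $\tfrac12(c^2-1)\geq c-1$, whereas you use Cauchy--Schwarz followed by $(|\vec X^{m+1}_\rho|-|\vec X^m_\rho|)^2\geq0$; both chains yield the same pointwise bound $|\vec X^m_\rho|\,(|\vec X^{m+1}_\rho|-|\vec X^m_\rho|)$, so this is a cosmetic variation rather than a different argument.
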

\begin{proof}
We set $\varphi^h = \ttau\,\mathscr{V}^{m+1}$ in \eqref{eq:nfd1} and $\chi^h = \ttau\,(\varkappa^{m+1}-\bkap)$ in \eqref{eq:nfd2} and combine the two equations to obtain that
\begin{align}\label{eq:nstab1}
&0=\ttau\,\ipd{\vec X^m\cdot\vec e_1\,\mathscr{V}^{m+1},\mathscr{V}^{m+1}\,|\vec X_\rho^m|} + \ipd{\vec X^m\cdot\vec e_1\,(\varkappa^{m+1}-\varkappa^m),~(\varkappa^{m+1}-\bkap)|\vec X_\rho^m|}\\
&\qquad+\frac{1}{2}\ipd{(\vec X^{m+1}-\vec X^m)\cdot\vec e_1\,|\vec X_\rho^{m+1}|,~(\varkappa^{m+1}-\bkap)^2}\nn\\
&\qquad+\frac{1}{2}\ipd{\vec X^m\cdot\vec e_1\,\frac{(\vec X^{m+1}-\vec X^m)_\rho\cdot\vec X_\rho^{m+1}}{|\vec X_\rho^m|},~(\varkappa^{m+1}-\bkap)^2}.\nn
\end{align}

Again using the inequality $(a-b)a\geq \frac{1}{2}(a^2-b^2)$ we have
\begin{align}\label{eq:nstab2}
&\ipd{\vec X^m\cdot\vec e_1\,\left[\varkappa^{m+1}-\bkap-(\varkappa^m-\bkap)\right],~(\varkappa^{m+1}-\bkap)|\vec X_\rho^m|}\nn\\
&\geq \frac{1}{2}\ipd{\vec X^m\cdot\vec e_1\,|\vec X^m_\rho|,~(\varkappa^{m+1}-\bkap)^2} - \frac{1}{2}\ipd{\vec X^m\cdot\vec e_1\,|\vec X^m_\rho|,~(\varkappa^{m}-\bkap)^2}\nn\\
&=\frac{1}{2}\ipd{\vec X^m\cdot\vec e_1\,|\vec X^m_\rho|,~(\varkappa^{m+1}-\bkap)^2}  - \frac{1}{2\pi}E_{\bkap}(\vec X^m,\varkappa^m).
\end{align}
Moreover, 
\begin{align}
&\ipd{\vec X^m\cdot\vec e_1\,\frac{(\vec X^{m+1}-\vec X^m)_\rho\cdot\vec X_\rho^{m+1}}{|\vec X_\rho^m|},~(\varkappa^{m+1}-\bkap)^2}\nn\\
&\geq \ipd{\vec X^m\cdot\vec e_1\,(\varkappa^{m+1}-\bkap)^2,~\frac{1}{2}\left(|\vec X_\rho^{m+1}|^2-|\vec X_\rho^m|^2\right)|\vec X_\rho^m|^{-1}}\nn\\
&= \ipd{\vec X^m\cdot\vec e_1\,(\varkappa^{m+1}-\bkap)^2,~\frac{1}{2}\left(|\vec X_\rho^{m+1}|^2|\vec X_\rho^m|^{-2}-1\right)|\vec X_\rho^m|}\nn\\
&\geq \ipd{\vec X^m\cdot\vec e_1\,(\varkappa^{m+1}-\bkap)^2,~|\vec X_\rho^{m+1}| - |\vec X_\rho^m|},
\label{eq:nstab3}
\end{align}
where we used the fact $(a-b)a\geq \frac{1}{2}(a^2-b^2)$ for the first inequality and the fact $\frac{1}{2}(a^2-1)\geq a-1$ for the second inequality. 

Inserting \eqref{eq:nstab2} and \eqref{eq:nstab3} into \eqref{eq:nstab1}, we get
\begin{align}
0&\geq \ttau\,\ipd{\vec X^m\cdot\vec e_1\,\mathscr{V}^{m+1},\mathscr{V}^{m+1}\,|\vec X_\rho^m|} + \frac{1}{2}\ipd{\vec X^m\cdot\vec e_1\,|\vec X^m_\rho|,~(\varkappa^{m+1}-\bkap)^2} - \frac{1}{2\pi}E(\vec X^m,\varkappa^m)\nn\\
&\qquad +\frac{1}{2}\ipd{(\vec X^{m+1}-\vec X^m)\cdot\vec e_1\,|\vec X_\rho^{m+1}|,~(\varkappa^{m+1}-\bkap)^2}\nn\\
&\qquad+\frac{1}{2}\ipd{\vec X^m\cdot\vec e_1\,(\varkappa^{m+1}-\bkap)^2,~|\vec X_\rho^{m+1}| - |\vec X_\rho^m|}\nn\\
&=\ttau\,\ipd{\vec X^m\cdot\vec e_1\,\mathscr{V}^{m+1},\mathscr{V}^{m+1}\,|\vec X_\rho^m|}  + \frac{1}{2\pi}\left(E_{\bkap}(\vec X^{m+1},\varkappa^{m+1})-E_{\bkap}(\vec X^m, \varkappa^m\right),
\end{align}
which implies \eqref{eq:nstab}. 
\end{proof}

\section{Numerical results}
\label{sec:numr}

Throughout the experiments, we start with a nodal interpolation of the initial parameterization $\vec x(\cdot,0)$ to obtain $\vec Y^{0}\in \pV^h$. We then employ the BGN method \cite{BGN07} with zero normal velocity to compute $\delta\vec Y^0\in \pV^h$ and $\kappa^0\in V^h$ such that
\begin{align*}
&\bigl(\vec\nu^{0}_Y\cdot\delta\vec Y^0,~\xi^h\,|\vec Y^0_\rho|\bigr)^h=0 \qquad\forall\xi^h\in V^h,\\
&\bigl(\kappa^{0}\,\vec\nu^0_Y,~\vec\eta^h\,|\vec Y^0_\rho|\bigr) ^h= -\bigl([\vec Y^{0}+\delta\vec Y^0]_\rho,~\vec\eta^h_\rho\,|\vec Y^0_\rho|^{-1}\bigr)\qquad\forall\vec\eta^h\in \pV^h, 
\end{align*}
where $\vec\nu^0_Y = -(\vec Y^m_\rho)^\perp\,|\vec Y_\rho^m|^{-1}$. This provides the initial data $\vec X^0=\vec Y^0+\delta\vec Y^0$ and $\kappa^0$. We note that it follows from \eqref{eq:bd1} and \eqref{eq:bd2} that \cite[(2.17)]{BGN19asy} 
\begin{equation}
\lim_{\rho\to\rho_0}\frac{\vec\nu\cdot\vec e_1}{\vec x\cdot\vec e_1} = \lim_{\rho\to\rho_0} \frac{\vec\nu_\rho\cdot\vec e_1}{\vec x_\rho\cdot\vec e_1} = \vec\nu_s(\rho_0,t)\cdot\vec\tau(\rho_0,t) = -\kappa(\rho_0,t),\quad\rho_0\in\partial\bI, t\in[0,T].
\end{equation}
This motivates the choice of discrete initial mean curvature $\varkappa^0\in V^h$ as
\begin{equation}
\varkappa^0(\rho_j) = \left\{\begin{array}{ll}
2\kappa^0(\rho_j)\qquad &\mbox{if}\quad\rho_j\in\partial\bI,\\[0.5em]
 \kappa^0(\rho_j) - \frac{\vec\omega^0(\rho_j)\cdot\vec e_1}{\vec X^0(\rho_j)\cdot\vec e_1}\qquad &\mbox{otherwise},
 \end{array}\right.
\end{equation}
on recalling \eqref{eq:vkappa},  where $\vec\omega^0$ is defined as the vertex normal in \eqref{eq:weightnor}. 

For the linear method \eqref{eqn:fd}, the two linear systems can be solved efficiently with the help of the SparseLU factorizations from the Eigen package \cite{eigenweb}. For the nonlinear method \eqref{eqn:nfd}, we can solve it efficiently with the help of Newton's iteration, or alternatively the following Picard iteration: For each $m\geq 0$, we set $\vec X^{m+1,0}= \vec X^m$, then for each $\ell\geq 0$, we compute $\mathscr{V}^{m+1,\ell+1}\in V^h$, $\varkappa^{m+1,\ell+1}\in V^h$, $\vec X^{m+1,\ell+1}\in\pV^h$ and $\kappa^{m+1,\ell+1}\in V^h$ such that
\begin{subequations}\label{eqn:pd}
\begin{align}\label{eq:pd1}
&\ipd{\vec X^m\cdot\vec e_1\,\mathscr{V}^{m+1,\ell+1},~\varphi^h\,|\vec X^m_\rho|}-\ipd{\vec X^m\cdot\vec e_1\,\varkappa^{m+1,\ell+1}_\rho,~\varphi^h_\rho\,|\vec X^m_\rho|^{-1}}\nn\\
&\qquad +2\ipd{\vec\nu^m\cdot\vec e_1\,\kappa^m\,(\varkappa^{m+1,\ell+1}-\bkap),~\varphi^h\,|\vec X^m_\rho|}\nn\\
&\qquad+\frac{1}{2}\ipd{\vec X^m\cdot\vec e_1\,(\varkappa^m+\bkap)\,\varkappa^m\,(\varkappa^{m+1,\ell+1}-\bkap),~\varphi^h\,|\vec X^m_\rho|}=0,\\[0.5em]
\label{eq:pd2}
&\ipd{\vec X^m\cdot\vec e_1\,\frac{\varkappa^{m+1,\ell+1} - \varkappa^m}{\ttau},~\chi^h\,|\vec X^m_\rho|} +\frac{1}{2}\ipd{\frac{\vec X^{m+1,\ell}-\vec X^m}{\ttau}\cdot\vec e_1\,|\vec X_\rho^{m+1,\ell}|,~(\varkappa^{m+1,\ell+1}-\bkap)\,\chi^h}\nn\\
&\qquad +\frac{1}{2}\ipd{\vec X^m\cdot\vec e_1\,\frac{(\vec X^{m+1,\ell}-\vec X^m)_\rho\cdot\vec X_\rho^{m+1,\ell}}{\ttau\,|\vec X_\rho^m|},~(\varkappa^{m+1,\ell+1}-\bkap)\,\chi^h} \nn\\ &\qquad
+ \ipd{\vec X^m\cdot\vec e_1\,\mathscr{V}^{m+1,\ell+1}_\rho,~\chi^h_\rho\,|\vec X^m_\rho|^{-1}} \nn\\
&\qquad - 2\ipd{\vec\nu^m\cdot\vec e_1\,\kappa^m\,\mV^{m+1,\ell+1},~\chi^h\,|\vec X^m_\rho|} -\frac{1}{2}\ipd{\vec X^m\cdot\vec e_1\,(\varkappa^m+\bkap)\,\varkappa^m\,\mathscr{V}^{m+1,\ell+1},~\chi^h\,|\vec X^m_\rho|}\nn\\
&\qquad-\frac{1}{2}\ipd{\vec X^m\cdot\vec e_1\,\frac{(\vec X^{m+1,\ell}-\vec X^{m})\cdot\vec\tau^m}{\ttau},~\varkappa^{m+1,\ell+1}_\rho\,\chi^h - (\varkappa^{m+1,\ell+1}-\bkap)\,\chi^h_\rho}=0,\\[0.5em]
\label{eq:pd3}
&\ipd{\frac{\vec X^{m+1,\ell+1}-\vec X^m}{\ttau}\cdot\vec\nu^m,~\xi^h\,|\vec X^m_\rho|}^h - \ipd{\mV^{m+1,\ell+1},~\xi^h\,|\vec X^m_\rho|}=0\,,\\[0.5em]
&\ipd{\kappa^{m+1,\ell+1}\,\vec\nu^m,~\vec\eta^h\,|\vec X^h_\rho|}^h +\ipd{\vec X^{m+1,\ell+1}_\rho,~\vec\eta^h_\rho\,|\vec X^m_\rho|^{-1}}=0,
\label{eq:pd4}
\end{align}
\end{subequations}
for all $\bigl(\varphi^h,\chi^h,\xi^h,\vec\eta^h\bigr)\in V^h\times V^h\times V^h\times\pV^h$. This leads to a decoupled linear system which can be solved in a similar manner to the linear method \eqref{eqn:fd}. In practice,  we repeat the above iteration \eqref{eqn:pd} until the following condition holds
\begin{equation*}
\max_{1\leq j\leq J}\left\{|\vec X^{m+1,\ell_0+1}(\rho_j)-\vec X^{m+1,\ell_0}(\rho_j)|,~ |\varkappa^{m+1,\ell_0+1}(\rho_j)-\varkappa^{m+1,\ell_0}(\rho_j)|\right\}\leq {\rm tol},
\end{equation*}
for some $\ell_0>0$, where ${\rm tol}=10^{-10}$ is a chosen tolerance. We then set $\mathscr{V}^{m+1} =\mathscr{V}^{m+1,\ell_0+1}$, and similarly for the variables $\varkappa^{m+1}$, $\vec X^{m+1}$ and $\kappa^{m+1}$. 

In the following we will present a variety of numerical examples for our introduced schemes. To examine the good mesh property of our introduced schemes, we introduce the mesh ratio quantity
\begin{equation*}
{\rm R}^m = \frac{\max_{1\leq j\leq J}|\vec a_{j-\frac{1}{2}}|}{\min_{1\leq j\leq J}|\vec a_{j-\frac{1}{2}}|},\quad m=0,1,\ldots, M, 
\end{equation*}
where $\vec a_{j-\frac{1}{2}}$ is defined as in \eqref{eq:ajm}. In the case when ${\rm R}^m=1$, it means that the vertices on the polygonal curve $\Gamma^m$ are equally distributed. 

\subsection{For genus-0 surfaces}

In this subsection, we focus on the case of genus-0 surfaces, so that $\partial\bI = \{0,1\}$. 

\begin{table}[!htp]
\centering
\caption{[$\partial\bI =\{0,1\}, \bkap=-1$] Errors and experimental orders of convergence (EOC) for an evolving sphere under Willmore flow with $\bkap=-1$ by using the linear scheme \eqref{eqn:fd}(upper panel) and the nonlinear scheme \eqref{eqn:nfd} (lower panel), where $h_0 = 1/32, \ttau_0=0.04$ and $T=1$.}
\label{tb:order}
\begin{tabularx}{0.8\textwidth}{@{}Xcccccc@{}}
\toprule
$(h,\ \ttau)$  
&$\norm{\vec X^h - \vec x}_{\infty}$ &EOC &$\norm{\varkappa^h - \varkappa}_{\infty}$ & EOC &$\norm{E_\bkap^h - E_\bkap}_{\infty}$ & EOC  \\
\midrule
$(h_0, \ttau_0)$
&4.75E-3  &--  &3.53E-2  &--   &9.55E-2 &-- \\
$(\frac{h_0}{2}, \frac{\ttau_0}{2^2})$
&1.20E-3  &1.98  &9.81E-3 &1.85  &2.40E-2 &1.99\\
$(\frac{h_0}{2^2}, \frac{\ttau_0}{2^4})$ 
&3.11E-4  &1.95 &2.53E-3 &1.96  &6.09E-3 &1.98\\
$(\frac{h_0}{2^3}, \frac{\ttau_0}{2^6})$ 
&8.19E-5 &1.93 &6.36E-4 &1.99  &1.52E-3 &2.00 \\
$(\frac{h_0}{2^4}, \frac{\ttau_0}{2^8})$ 
&2.16E-5 &1.92 &1.59E-4 &2.00  &3.81E-4 &2.00 \\
\midrule
$(h_0, \ttau_0)$
&1.52E-2  &--  &2.73E-2  &--  &2.37E-1 &--\\
$(\frac{h_0}{2}, \frac{\ttau_0}{2^2})$
&3.83E-3  &1.99  &7.30E-3 &1.90 &6.36E-2 &1.90  \\
$(\frac{h_0}{2^2}, \frac{\ttau_0}{2^4})$ 
&9.70E-4  &1.98 &1.86E-3 &1.97 &1.62E-2 &1.97\\
$(\frac{h_0}{2^3}, \frac{\ttau_0}{2^6})$ 
&2.46E-4 &1.98 &4.66E-4 &2.00 &4.07E-3 &1.99\\
$(\frac{h_0}{2^4}, \frac{\ttau_0}{2^8})$ 
&6.26E-5 &1.97 &1.17E-4 &1.99 &1.01E-3 &2.01\\
\bottomrule
\end{tabularx}
\end{table}

\noindent
{\bf Example 1}: We begin with a convergence experiment for a sphere that was considered in \cite[page 755]{BGN19asy}. We note that a sphere with radius $r(t)$ satisfying 
\begin{equation}\label{eq:Rtime}
r^\prime(t) = -\frac{\bkap}{r(t)}\left(\frac{2}{r(t)} +\bkap \right), \qquad r(0) = r_0\in\bRplus, 
\end{equation}
is an exact solution to the flow \eqref{eqn:WillmoreS}. In the case of $\bkap\neq 0$, the ordinary differential equation in \eqref{eq:Rtime} can be solved with 
\[r(t) = z(t) - \frac{2}{\bkap},\qquad \varkappa(t) = -\frac{2}{r(t)}\qquad\mbox{with}\quad z_0 =r_0+ \frac{2}{\bkap},\]
and $z(t)$ satisfies $\frac{1}{2}\left(z^2(t) - z_0^2\right) - \frac{4}{\bkap}(z(t)-z_0) +\frac{4}{\bkap^2}\ln\frac{z(t)}{z_0} + \bkap^2 t = 0$. 

We consider a semicircle of radius $r_0=1$ for the initial generating curve $\Gamma(0)$ and choose $T=1, \bkap=-1$. By \eqref{eq:Rtime}, we know that the unit sphere will expand towards a sphere of radius 2 in time.  Initially, we set $\vec X^0\in\pV^h$ with
\begin{equation}
\vec X^0(\rho_j) = r_0\left(\begin{matrix}\cos[(\frac{1}{2}-\rho_j)\pi +\epsilon\cos((\frac{1}{2}-\rho_j)\pi)] \\[0.4em] \sin[(\frac{1}{2}-\rho_j)\pi +\epsilon\cos((\frac{1}{2}-\rho_j)\pi)]
\end{matrix}\right),\nn
\end{equation}
where we choose $\epsilon=0.1$ to force a nonuniform distribution of vertices. We introduce the errors of the generating curve, the mean curvature and the energy over the time interval $[0,1]$ as
\begin{subequations}\label{eqn:dderror}
\begin{align}
\label{eq:Xerror}
\norm{\vec X^h-\vec x}_{\infty} &= \max_{1\leq m\leq M}\max_{1\leq j\leq J}\left||\vec X^m(\rho_j)| - r(t_m)\right|,\\
\label{eq:kaerror}
\norm{\varkappa^h-\varkappa}_{\infty} &= \max_{1\leq m\leq M}\max_{1\leq j\leq J}\left|\varkappa^m(\rho_j) - \varkappa(t_m)\right|,\\
\norm{E_{\bkap}^h- E_\bkap}_{\infty} &= \max_{1\leq m\leq M}| E_\bkap^m-E_\bkap(t_m)|,
\label{eq:energyerror}
\end{align}
\end{subequations}
where the discrete energy $E_\bkap^m$ is given by $E_\bkap(\vec X^{m-1},\varkappa^{m})$ for the linear scheme \eqref{eqn:fd} and by $E_\bkap(\vec X^m,\varkappa^{m})$ for the nonlinear scheme \eqref{eqn:nfd}. For the evolving sphere, we also note that $E_\bkap(t) = 2\pi\,(2-r(t))^2$.

The numerical errors and orders of convergence are reported in Table \ref{tb:order}, where we  observe a second-order convergence rate for both the linear scheme \eqref{eqn:fd} and the nonlinear scheme \eqref{eqn:nfd} when the time step is chosen as $\ttau = O(h^2)$.

In fact,  the numerical results from the linear scheme \eqref{eqn:fd}  and the nonlinear scheme
\eqref{eqn:nfd} are graphically indistinguishable, so that in the following we will only report the numerical results from the former.

\noindent
{\bf Example 2}: In this example, we conduct experiments for an initial disc shape of total dimension $7\times 1\times 7$. We first consider the case of $\bkap=0$ and the numerical results are shown in Fig.~\ref{fig:disk1}. As expected, we observe that the flat disk evolves towards a sphere as the steady state. We also report the results of the discrete energy and the mesh ratio ${\rm R}^m$ in Fig.~\ref{fig:disk1ER}, and we observe the energy decay and the equidistribution property. In particular, the discrete energy at the final time $t=10$ is given by $25.27$. This approximates the value $8\pi=25.13$, i.e., the Willmore energy of a sphere.

We next consider the case of $\bkap=-1.25$ and repeat the above experiment. The numerical results are reported in Fig.~\ref{fig:disk2} and Fig.~\ref{fig:disk2ER}. Here we observe that the disk evolves towards a sphere of radius around $-\frac{2}{\bkap}=1.6$, and the energy decreases to zero in time. Moreover, a good mesh quality for the polygonal curve is observed as well for this experiment. 

\begin{figure}[htp]
\centering
\includegraphics[width=0.9\textwidth]{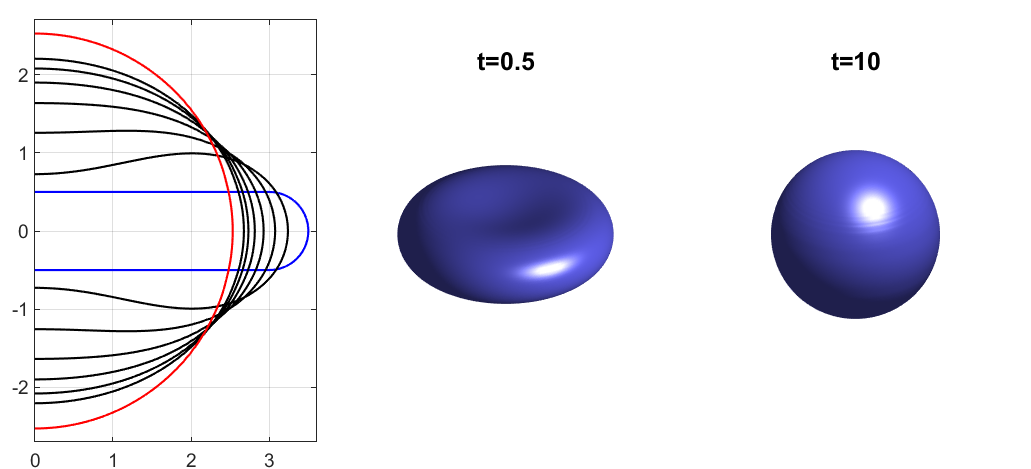}
\caption{[$\bkap=0$, $J = 128$, $\ttau = 10^{-3}$] Evolution of an initial disk of dimension of $7\times 1\times 7$. We plot $\Gamma^m$ at times $t=0,0.5,1,\cdots,3, 10$ and visualize the axisymmetric surfaces at $t=0.5$ and  $t=10$.}
\label{fig:disk1}
\end{figure}

\begin{figure}
\centering
\includegraphics[width=0.95\textwidth]{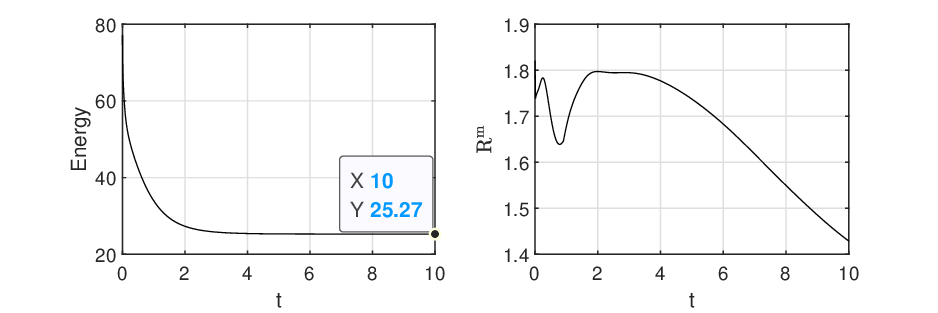}
\caption{Plots of the discrete energy and the mesh ratio ${\rm R}^m$ for the evolution of the disk in Fig.~\ref{fig:disk1}.}
\label{fig:disk1ER}
\end{figure}

\begin{figure}[!htp]
\centering
\includegraphics[width=0.9\textwidth]{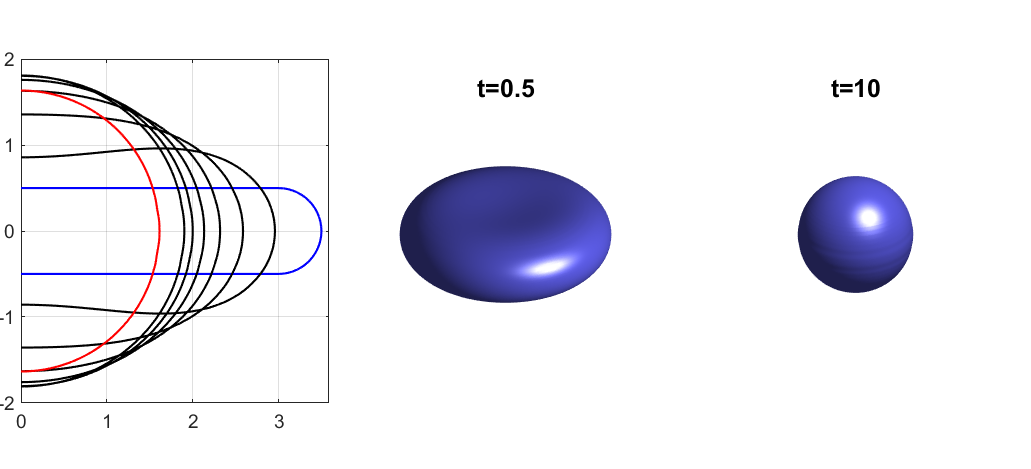}
\caption{[$\bkap=-1.25$, $J = 128$, $\ttau = 10^{-3}$] Evolution of an initial disk of dimension of $7\times 1\times 7$. We plot $\Gamma^m$ at times $t=0,0.5,1,\cdots, 3,10$ and visualize the axisymmetric surfaces at $t=0.5$ and  $t=10$. }
\label{fig:disk2}
\end{figure}

\begin{figure}[!htp]
\centering
\includegraphics[width=0.9\textwidth]{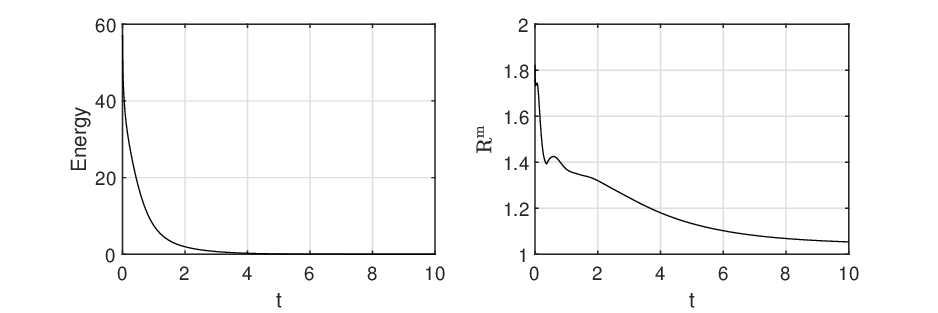}
\caption{Plots of the discrete energy and the mesh ratio ${\rm R}^m$ for the evolution of the disk in Fig.~\ref{fig:disk2}.}
\label{fig:disk2ER}
\end{figure}

\begin{figure}[!htp]
\centering
\includegraphics[width=0.4\textwidth]{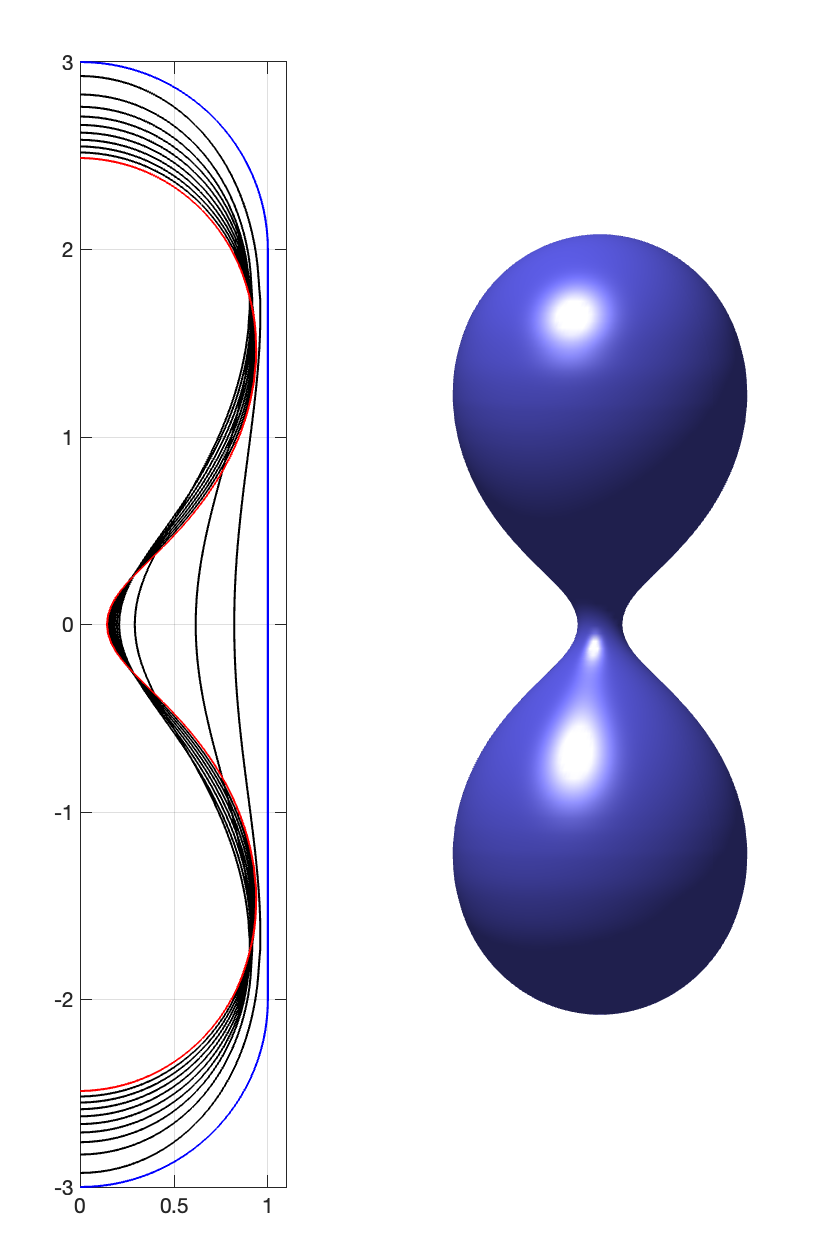}
\includegraphics[width=0.4\textwidth]{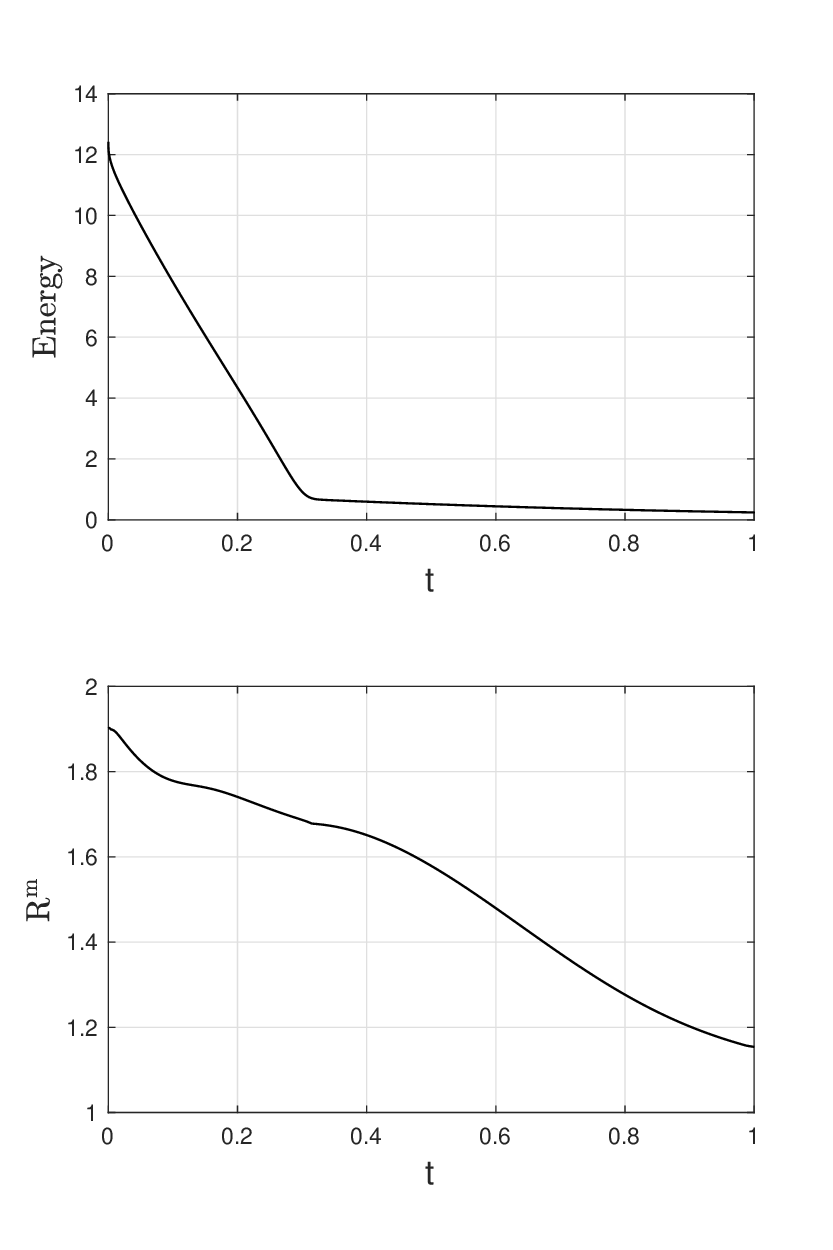}
\caption{[$\bkap=-2, J = 128, \ttau = 2.5\times 10^{-4}$] Evolution of an initial rounded cylinder of dimension $2\times 6\times2$. We plot $\Gamma^m$ at times $t=0, 0.1,0.2,\cdots,1$ and visualize the axisymmetric surface generated by $\Gamma^m$ at time $t=1$. On the right are plots of the discrete energy and the mesh ratio ${\rm R}^m$.}
\label{fig:cylinder1}
\end{figure}

\noindent
{\bf Example 3}: We conduct an experiment for a rounded cylinder of total dimension $2\times 6 \times 2$ in the case of $\bkap=-2$, which was also considered in \cite[Fig.9]{pwfade}. The numerical results are shown in Fig.~\ref{fig:cylinder1}. Here the surface would like to pinch off into two unit spheres, and we compute the solutions until time $t=1$. We observe the energy decay and the good mesh quality during the simulation, even when the vertices on $\Gamma^m$ are approaching the $x_2$–axis.

\subsection{For genus-1 surfaces}
In this subsection, we focus on the case of genus-1 surfaces, so that $\partial\bI = \emptyset$.

\begin{table}[!htp]
\centering
\caption{[$\partial\bI =\emptyset, \bkap=0$] Errors and experimental orders of convergence (EOC) for a Clifford torus under Willmore flow by using the linear scheme \eqref{eqn:fd}, where $h_0 = 1/32, \ttau_0=0.04$ and $T=1$.}
\label{tb:order1}
\begin{tabularx}{0.8\textwidth}{@{}Xcccccc@{}}
\toprule
$(h,\ \ttau)$  
&$\norm{\vec X^h - \vec x}_{\infty}$ &EOC &$\norm{\vec X^h - \vec x}_{L^2}$ & EOC &$\norm{E_\bkap^h - E_\bkap}_{\infty}$ & EOC  \\
\midrule
$(h_0, \ttau_0)$
&8.14E-3  &--  &3.05E-2  &--   &7.77E-2 &-- \\
$(\frac{h_0}{2}, \frac{\ttau_0}{2^2})$
&1.99E-3  &2.03  &7.47E-3 &2.03  &1.92E-2 &2.02\\
$(\frac{h_0}{2^2}, \frac{\ttau_0}{2^4})$ 
&4.98E-4  &2.00 &1.86E-3 &2.01  &4.81E-3 &2.00\\
$(\frac{h_0}{2^3}, \frac{\ttau_0}{2^6})$ 
&1.24E-4 &2.01 &4.63E-4 &2.01  &1.20E-3 &2.00 \\
$(\frac{h_0}{2^4}, \frac{\ttau_0}{2^8})$ 
&3.11E-5 &2.00 &1.15E-4 &2.01  &3.01E-4 &2.00 \\
\bottomrule
\end{tabularx}
\end{table}

\noindent
{\bf Example 4}: We first perform a convergence experiment for a Clifford torus, which is the well-known minimizer for the Willmore energy \eqref{eq:WillmoreE0} among genus-1 surfaces.  Initially we set $\vec X^0\in\pV^h$ such that
\begin{equation}
\vec X^0(\rho_j) = \left(\begin{matrix} R+r\,\cos[(\frac{1}{4}-\rho_j)2\pi +\epsilon\cos((\frac{1}{4}-\rho_j)2\pi)] \\[0.4em] r\,\sin[(\frac{1}{4}-\rho_j)2\pi +\epsilon\cos((\frac{1}{4}-\rho_j)2\pi)]
\end{matrix}\right),\nn
\end{equation}
where $R=\sqrt{2}$ and $r=1$ stand for the major and minor radius of the torus, respectively. We again use $\epsilon=0.1$ to enforce a nonuniform vertex distribution. We note that $R/r = \sqrt{2}$ characterizes a Clifford torus with Willmore energy of $4\pi^2$. In addition to \eqref{eqn:dderror}, we also introduce the manifold distance error \cite{Zhao2021energy}. Let $\Gamma_1$ and $\Gamma_2$ be two closed curves and let $\Omega_1$ and $\Omega_2$ be the region enclosed by $\Gamma_1$ and $\Gamma_2$, respectively. The difference between $\Gamma_1$ and $\Gamma_2$ is measured by the area of the symmetric difference region between $\Omega_1$ and $\Omega_2$
\begin{equation}
{\rm MD}(\Gamma_1,\Gamma_2) = |(\Omega_1\setminus\Omega_2)\cup(\Omega_2\setminus\Omega_1)| = |\Omega_1| + |\Omega_2|-2 |\Omega_1\cap\Omega_2|,\nn 
\end{equation}
where $|\Omega|$ represents the area of the region $\Omega$. We define the error
\begin{equation}
\norm{\vec X^h - \vec x }_{L^2} = {\rm MD }(\Gamma^M, \Gamma(t_M)), \nn 
\end{equation}
to measure the manifold distance errors of the polygonal curve at the final time $t=t_M$. The numerical results are reported in Table \ref{tb:order1}, where we observe again a second-order convergence rate for the numerical solutions.

\begin{figure}[!htp]
\centering
\includegraphics[width=0.9\textwidth]{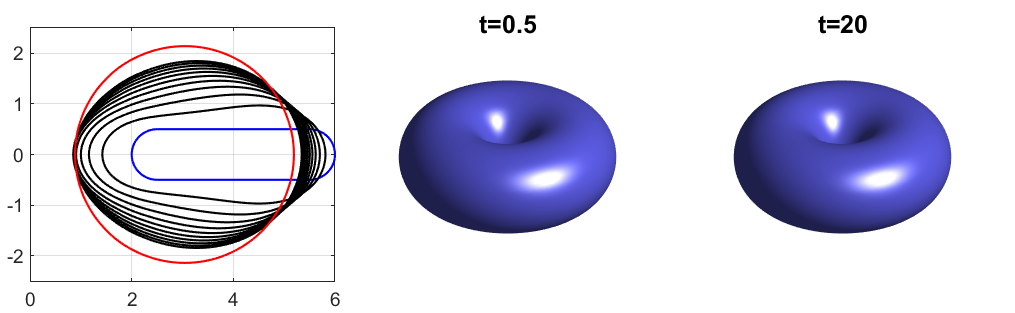}
\includegraphics[width=0.9\textwidth]{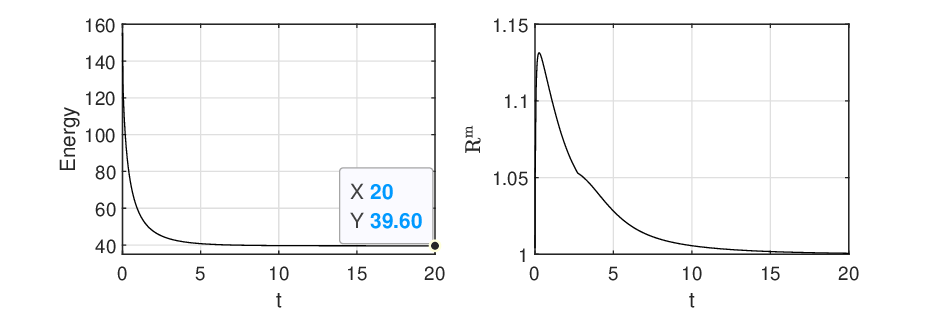}
\caption{[$\bkap=0, J = 128, \ttau = 2.5\times 10^{-4}$] Evolution of an initial flat annulus shape towards a Clifford torus. We plot $\Gamma^m$ at times $t=0,0.5,\cdots,5, 20$ and visualize the axisymmetric surfaces generated by $\Gamma^m$ at $t=0.5$ and $t=20$. On the bottom are plots of the discrete energy and the mesh ratio ${\rm R}^m$.}
\label{fig:torus1}
\end{figure}

\noindent
{\bf Example 5}: We consider the evolution of a genus-1 surface, where the generating curve $\Gamma(0)$ is given by an elongated cigar-like shape of total dimension $4\times 1$, with barycenter $(4,0)$. We first consider the case of $\bkap=0$ and the numerical results are shown in Fig.~\ref{fig:torus1}. We observe that the flat annulus evolves towards a surface of torus shape. 

To further assess the numerical results quantitatively, we introduce the following quantities  
\[\vec X_c^m = \frac{1}{J}\sum_{j=1}^J\vec X^m(\rho_j),\quad r_c^m = \frac{1}{J}\sum_{j=1}^J |\vec X_c^m - \vec X^m(\rho_j)|,\quad d_c^m = \frac{\max_{1\leq j\leq M}|\vec X_c^m - \vec X^m(\rho_j)|}{\min_{1\leq j\leq M}|\vec X_c^m - \vec X^m(\rho_j)|},\]
where $\vec X_c^m$ and $r_c^m$ are the mean centre and radius of the generating curve $\Gamma^m$, respectively, and $d_c^m$ measures the deviation of  $\Gamma^m$ from the circle.  At the final time $t=20$, the mean centre for $\Gamma^m$ is given by $(3.039,0)$ with mean radius $r_c^m = 2.146$ and $d^m_c = 1.007$ which implies the discrete surface is an almost perfect approximation of a torus.  Moreover, the ratio of the two radii of the torus is given by $3.039/2.146=1.416$, which approximates $\sqrt{2}$. In particular, at the final time, the discrete energy is given by $39.60$. This approximates the value of $4\pi^2=39.478$, which is the Willmore energy of the Clifford torus. We notice that our scheme has no problem
to capture this steady state of Willmore flow in a stable and reliable way,
in contrast to the methods from recent works \cite{Ma25energy,LL25axis}. In
fact, our scheme approaches a numerical steady state represented by an
equidistributed polygonal circle.

\begin{figure}[tp]
\centering
\includegraphics[width=0.9\textwidth]{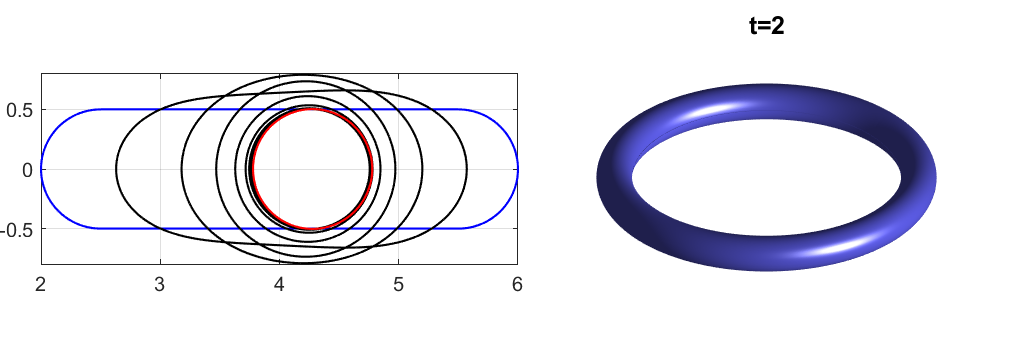}
\includegraphics[width=0.9\textwidth]{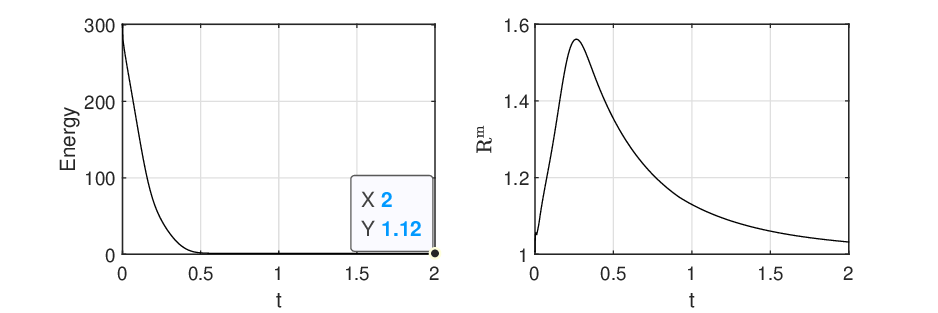}
\caption{[$\bkap=-2, J = 256, \ttau = 6.25\times 10^{-5}$] Evolution of an initial flat annulus shape towards a thin torus. We plot $\Gamma^m$ at times $t=0,0.1,\cdots,1,2$ and visualize the axisymmetric surface generated by $\Gamma^m$ at $t=2$. On the bottom are plots of the discrete energy and the mesh ratio ${\rm R}^m$.}
\label{fig:torus2}
\end{figure}

\begin{figure}[!htp]
\centering
\includegraphics[width=0.9\textwidth]{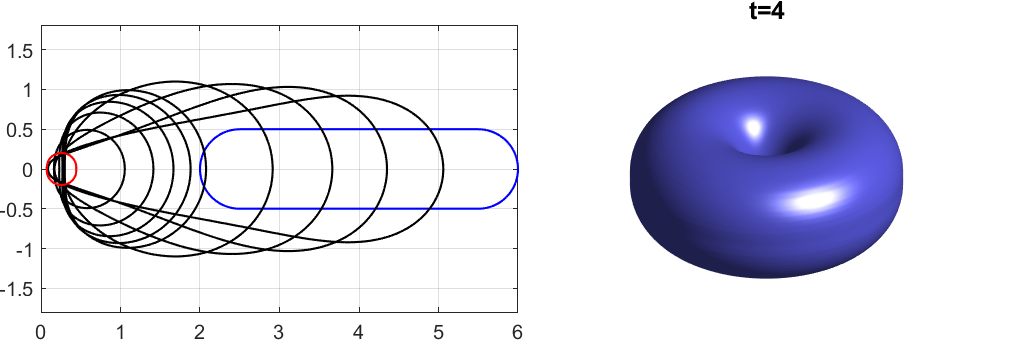}
\includegraphics[width=0.9\textwidth]{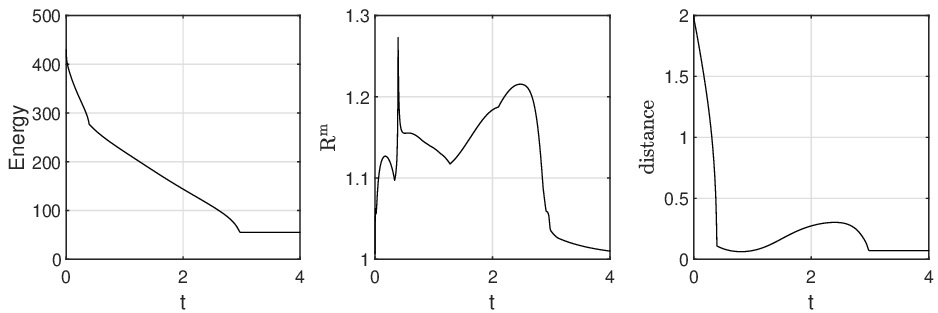}
\caption{[$\bkap=1, J = 256, \ttau = 6.25\times 10^{-5}$] Evolution of an initial flat annulus shape. We plot $\Gamma^m$ at times $t=0,0.5,\cdots,2.5,2.6,2.7,2.8,2.9,3,4$, where the plots for $t=3$ and $t=4$ lie on top of each other.  We also visualize the axisymmetric surface generated by $\Gamma^m$ at $t=4$. On the bottom are plots of the discrete energy, the mesh ratio ${\rm R}^m$ and the distance to the rotational axis.}
\label{fig:torus3}
\end{figure}

\begin{figure}[!htp]
\centering
\includegraphics[width=0.9\textwidth]{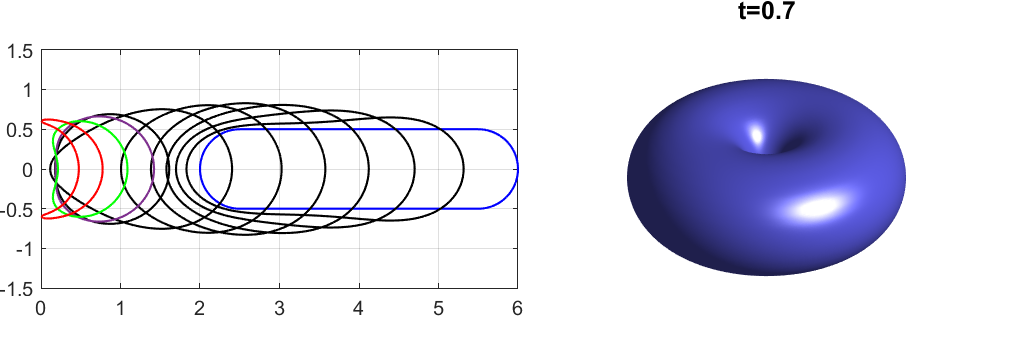}
\includegraphics[width=0.9\textwidth]{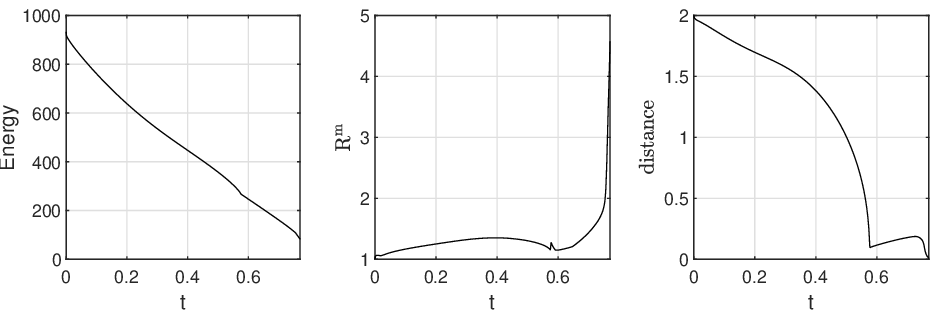}
\caption{[$\bkap=2, J = 128, \ttau = 2.5\times 10^{-4}$] Evolution of an initial flat annulus shape until its pinch-off. We plot $\Gamma^m$ at times $t=0,0.1,\cdots,0.7,0.72,0.75, 0.77$ and visualize the axisymmetric surface generated by $\Gamma^m$ at $t=0.7$. On the bottom are plots of the discrete energy, the mesh ratio ${\rm R}^m$ and the distance to the rotational axis.}
\label{fig:torus4}
\end{figure}

We next repeat the above experiment with $\bkap=-2$, and the numerical results are shown in Fig.~\ref{fig:torus2}. Here the flat annulus evolves towards a much thinner torus shape, in order to reduce its energy. The numerical quantities for $\Gamma^m$ at the final time is given by $\vec X_c^m = (4.277, 0)$ with $r_c^m = 0.501$ and $d_c^m =1.006$. Hence the ratio of the two radii of the torus is given by $R/r = 8.537$.

\noindent
{\bf Example 6}:

In our last example, we consider experiment with the value $\bkap=1$ for the spontaneous curvature, and the numerical results are reported in Fig.~\ref{fig:torus3}. In this case, we observe that the flat annulus shrinks towards the rotational axis to decrease the energy. However, in contrast to some other flows, like, e.g.,  the surface diffusion flow, no pinch-off occurs here. Moreover, the mean centre of the discrete generating curve at the final time is given by $(0.258,0)$ with mean radius $r_c^m=0.196$ and $d_c^m = 1.086$. This means that the corresponding axisymmetirc surface is  also a good approximation of a torus. In fact, an energy decay can be observed as well for the numerical solutions, as well as a good mesh quality. 

Finally, we repeat the experiment with $\bkap=2$ and the numerical results are visualised in Fig.~\ref{fig:torus4}. Here we observe the occurrence of pinch-off, and in particular this leads to a sickle shape for the generating curve. We note that similar sickle shapes have been observed in \cite{Julicher93phase, Seifert91vesicles} which considered the toroidal minimizers of genus-1 shape for the Helfrich energy  with constrained volume and surface area. 

\section{Conclusions}
\label{sec:con}
We proposed and analyzed two parametric finite element approximations for the axisymmetric Willmore flow of surfaces with spontaneous curvature effects. The introduced schemes were shown to satisfy unconditional stability estimates, as well as an asymptotic equidistribution property. This relies on a new geometric PDE for the axisymmetric Willmore flow which combines an evolution of the surface's mean curvature with the curvature formulation of the generating curve. The first curvature approximation enables a stability estimate while the second curvature approximation is responsible for the good mesh quality. We presented a variety of numerical examples to showcase the accuracy, stability and the good mesh quality of the introduced schemes. In the future, we will generalize the idea to Willmore flow of surfaces in the general three-dimensional case.

\section*{Acknowledgements}

This work was partially supported by the National Natural
Science Foundation of China No. 12401572 (Q.Z) and the Key Project of the National Natural Science Foundation of China No.~12494555 (Q.Z).

\footnotesize
\bibliographystyle{abbrv}
\bibliography{bib}

\end{document}